\let\prim\Prim
\begin{document}

\title{Brauer relations in finite groups II -- quasi-elementary groups of order $p^aq$}
\author{Alex Bartel$^1$}
\address{$^1$Mathematics Institute, Zeeman Building, University of Warwick,
Coventry CV4 7AL, UK}
\email{a.bartel@warwick.ac.uk}
\author{Tim Dokchitser$^2$}
\address{$^2$Department of Mathematics, University Walk, Bristol BS8 1TW, United Kingdom}
\email{tim.dokchitser@bristol.ac.uk}
\llap{.\hskip 10cm} \vskip -0.8cm

\begin{abstract}
This is the second in a series of papers investigating the space of Brauer
relations of a finite group, the kernel of the natural map from its 
Burnside ring to the rational representation ring.
The first paper classified all primitive Brauer relations, that
is those that do not come from proper subquotients.
In the case of quasi-elementary groups the description is intricate,
and it does not specify groups that have primitive relations in terms of
generators and relations. In this paper we provide such a classification in
terms of generators and relations for quasi-elementary groups of order $p^aq$.
\end{abstract}
\maketitle
\tableofcontents

\pagestyle{fancy}
\fancyhf{} 
\fancyhead[C]{\textsc{Brauer relations in quasi--elementary groups of order $p^aq$}}
\fancyfoot[C]{\thepage}

\section{Introduction}
Let $G$ be a finite group.
Recall that there is a natural map from the Burnside ring $B(G)$ of $G$
to the rational representation ring $R_\Q(G)$, which sends a $G$-set to
the corresponding permutation representation. In \cite{bra1}, we have
completely  described the kernel $K(G)$ of this map by giving a canonical
set of generators, following the work of Bouc \cite{Bouc} who had described
this kernel for $p$-groups.
The purpose of this paper is to make our description more explicit in the
most difficult case, that of quasi-elementary groups.

We will write elements of $B(G)$ as $\Z$-linear combinations
$\sum_{H\leq G} a_H H$, using the identification between transitive $G$-sets
and conjugacy classes of subgroups of $G$ that maps a $G$-set to a point
stabiliser. We call an element of $K(G)$ a \emph{Brauer relation}.

If $H\leq G$, then there is an induction map $B(H)\longrightarrow B(G)$,
which yields an induction map $K(H)\longrightarrow K(G)$. We say that a
Brauer relation of $G$ is \emph{induced from $H$} if it is in the image of
this map. Also, if $N\normal G$, then there is an inflation map
$B(G/N)\longrightarrow B(G)$, which induces a map $K(G/N)\longrightarrow K(G)$.
We say that a Brauer relation of $G$ is \emph{lifted from $G/N$} if it is
in the image of this inflation map. We call a relation \emph{imprimitive}
if it is a linear combination of relations that are induced and/or lifted
from proper subquotients of $G$ and \emph{primitive} otherwise. The
quotient of $K(G)$ by the subgroup consisting of imprimitive relations will
be denoted by $\Prim(G)$.

The main result of \cite{bra1} is a group theoretic criterion for $\Prim(G)$
to be non-trivial and a determination of the group structure of $\Prim(G)$
and of a set of generators. This criterion is most complicated when $G$ is
quasi-elementary, i.e. when it is of the form $G= C\rtimes P$, where
$C$ is cyclic and $P$ is a $p$-group. Write $K$ for the kernel of
the conjugation action of $P$ on $C$. It is shown in \cite{bra1} that for
such a group to have primitive relations, $C$ must be of square-free order,
and $K$ must be either trivial, or isomorphic to $D_8$,
or have normal $p$-rank one. When $K$ is trivial, a complete description of
$\Prim(G)$ is provided by \cite[Proposition 6.5]{bra1}.
Here, we explicitly describe the structure of those quasi-elementary groups
for which $\Prim(G)$ is non-trivial in the case that the order of $C$ is
prime and $K$ is non-trivial.
We determine the general shape of a presentation for such groups
in terms of generators and relations, thereby explicating the criterion of
\cite[Theorem 7.30]{bra1} in this special case.

\begin{theorem}\label{thm:main}
Let $G= C\rtimes P$ be quasi-elementary, where $C$ is cyclic of prime
order $q$ and $P$ is a $p$-group with $p\neq q$.  Suppose that
$K=\ker(P\rightarrow \Aut C)$ is either isomorphic to $D_8$ or has normal
$p$-rank one. Let $K$ be generated by $c$ of
order $p^n$ and possibly $x$ of order 2 or 4 as in Proposition \ref{prop:prankone},
and define $j,k$ by $hch^{-1} = c^j$, $hxh^{-1} = c^kx$.
 A necessary condition for $G$ to have
primitive relations is that $P$ must be of the form $P=K\rtimes A$,
where $A$ acts faithfully on $C$, and in particular is cyclic of order $p^m$,
say, $A=\langle h\rangle$. Assume that these conditions are satisfied.
The following are all the cases in which $\Prim(G)$ is
non-trivial, together with the group structure of $\Prim(G)$:
\begin{enumerate}
\item\label{item:cyc1} $K$ is cyclic, $p\ne 2$, $n \le m$; $\Prim(G)\cong C_p^{u}$,
where $u=p-2$ if $n=1$ and $u=p-1$ otherwise;~or
\item\label{item:cyc2} $K$ is cyclic, $p=2$, $j \ne -1$; moreover, either
$1 < n \le m$, or $j\equiv 3\pmod 4$ and the order of $j$ in $(\Z/2^n\Z)^\times$
divides $2^{m-1}$; in either case, $\Prim(G)\cong C_2$;~or
\item\label{item:quat} $K$ is generalised quaternion, $k$ is odd, and $n<m$;
$\Prim(G)\cong C_2$;~or
\item\label{item:diheven} $K$ is dihedral, $k$ is even, $j\neq \pm1$, and
\[
2^n|j^{2^{m-1}}-1,\;\; 2^n|k(j^{2^{m-1}}-1)/(j-1);
\]
$\Prim(G)\cong C_2$;~or
\item\label{item:dihodd} $K$ is dihedral, $k$ is odd, and either $m>n$ or
\[
j\neq \pm 1,\;\; 2^n|(j^{2^{m-1}}-1)/(j-1);
\]
$\Prim(G)\cong C_2\times C_2$
if $m>n$ and $j\neq \pm 1$, and $\Prim(G)\cong C_2$ otherwise.
\end{enumerate}
\end{theorem}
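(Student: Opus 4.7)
The plan is to derive Theorem \ref{thm:main} from the abstract criterion \cite[Theorem 7.30]{bra1} by translating it into the explicit numerical conditions in items (1)--(5). I would proceed in three stages: first a structural reduction pinning down the shape of $P$, then a case-by-case numerical translation, and finally the determination of the group structure of $\Prim(G)$.

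For the structural step, since $C$ has prime order $q$ we have $\Aut C\cong C_{q-1}$, so the quotient $P/K$ is cyclic. The hypothesis of \cite[Theorem 7.30]{bra1}, combined with the classification of $p$-groups of normal $p$-rank one recorded in Proposition \ref{prop:prankone}, forces $K$ to admit a cyclic complement $A=\langle h\rangle$ in $P$ acting faithfully on $C$. This yields the required decomposition $P=K\rtimes A$ with $|A|=p^m$, and fixes a presentation of $G$ in terms of $c$, possibly $x$, and $h$, with $j$ and $k$ encoding the action of $h$ on $K$.

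For the case analysis I would, for each of the five configurations, read off from the presentation the proper subquotients of $G$ through which a candidate primitive relation could be induced or inflated, and check the criterion of \cite[Theorem 7.30]{bra1} against these. The arithmetic conditions such as $2^n\mid j^{2^{m-1}}-1$ arise from requiring that specific cyclic subgroups of $K$ be stabilised by particular subgroups of $A$; the condition $2^n\mid k(j^{2^{m-1}}-1)/(j-1)$ arises from expanding $h^{2^{m-1}} x h^{-2^{m-1}}$ inside $K$ using the given commutation relations and demanding that the result lie in $\langle c\rangle$. The structure of $\Prim(G)$ is then read off from the resulting count of independent primitive generators: the discrepancy between $p-2$ and $p-1$ in case (1) records whether one standard generator is in fact lifted from a proper quotient, and the $C_2\times C_2$ in case (5) reflects two independent sources of primitive relations, one coming from the dihedral structure of $K$ and one from the action of $\langle h\rangle$ on $\langle c\rangle$.

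The principal obstacle is case (5), which is the only one in which $\Prim(G)$ can have rank greater than one. Establishing the precise dichotomy between $C_2\times C_2$ and $C_2$ requires showing that the two candidate primitive relations remain linearly independent modulo imprimitive relations exactly when $m>n$ and $j\neq\pm 1$, which in turn demands careful bookkeeping of when, under the various arithmetic degeneracies, one of the candidates becomes induced from a proper subgroup such as $C\rtimes\langle x,h\rangle$ or lifted from $G/\langle c\rangle$. This linear-independence check, rather than the translation of the criterion itself, is where the bulk of the technical work is expected to lie.
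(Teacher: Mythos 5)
Your proposal does not engage with what the criterion of \cite[Theorem 7.30]{bra1} (restated as Theorem \ref{thm:Knontriv}) actually says, and this is a genuine gap rather than a difference of route. The criterion is graph-theoretic: one forms the set $\cH_m$ of maximal subgroups of $P$ not containing the central subgroup of order $p$ of $K$, joins $H,H'\in\cH_m$ by an edge when they generate a proper subgroup of $P$ or when $H\cap H'$ has index $p$ in both with $HH'/H\cap H'$ dihedral or Heisenberg, and then $\Prim(G)\cong (C_p)^{d-1}$ with $d$ the number of connected components. Every numerical condition in the theorem is produced by this computation: the conditions $n\le m$, $n<m$, $2^n\mid (j^{2^{m-1}}-1)/(j-1)$, and so on come from deciding which candidate subgroups $\langle c^\alpha h\rangle$, $\langle hxc^r\rangle$, $\langle c^\delta x, c^\gamma h\rangle$ actually have the minimal possible order and hence lie in $\cH_m$ (via the valuation identity $v_l(j^s-1)=v_l(j-1)+v_l(s)$ of Lemma \ref{lem:ladicVal}, which your outline never invokes), and from deciding which pairs are joined by an edge. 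Your substitute --- ``read off the proper subquotients through which a candidate primitive relation could be induced or inflated and check the criterion against these'' --- is not the criterion and gives no procedure that would output the stated conditions; likewise your account of the rank (a ``linear-independence check modulo imprimitive relations'' in case (5)) does not correspond to anything in the argument: the rank is simply $d-1$, and the work in case (5) consists of counting conjugacy classes of ``full image'' and ``half image'' subgroups and showing that $\Gamma$ has $1$, $2$ or $3$ components according to whether $m>n$ and whether $j\ne -1$.

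Two further points. The structural step is not ``forced by the classification'': Proposition \ref{prop:semidir} requires an argument --- if no element of $\cH$ surjects onto the image of $P$ in $\Aut C$ then all generators of $\Prim(G)$ from Lemma \ref{lem:genKnontriv} are already imprimitive, and in the dihedral and semi-dihedral cases one must additionally promote a splitting over $C_K$ to a splitting over $K$ using the fact that conjugation by a non-central involution is not divisible by $2$ in $\Aut C_K$; a cyclic quotient $P/K$ with $K$ of normal $p$-rank one does not by itself guarantee a complement. Finally, your explanation of $u=p-2$ versus $u=p-1$ in case (1) is incorrect: $n=1$ is the case $|K|=p$, where $P$ is a direct product and Theorem \ref{thm:Knontriv} does not apply at all; that case is handled separately by \cite[Proposition 7.29]{bra1}, not by one generator ``becoming a lift''. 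Your outline also omits the semi-dihedral case, which must be shown to contribute nothing, and the necessity of the kernel condition of Hypothesis \ref{hyp:nontrivker} coming from Lemma \ref{lem:ZP}.
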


Our strategy is equally suited to obtaining a concrete result of the same
nature when $|C|$ has any other fixed number of prime divisors, although in
the case under consideration some substantial simplifications occur, most
notably Proposition \ref{prop:semidir}.

\section{Some general results on the structure of $P$}

We recall the relevant results from \cite{bra1}. Throughout, we assume that
$G= C\rtimes P$, where $C$ is cyclic of prime order $q$ and $P$ is a
$p$-group with $p\neq q$. The kernel of the conjugation action of $P$ on $C$
is denoted by $K$.

\begin{proposition}[\cite{bra1}, Proposition 7.6]
If $G$ has a primitive relation, then $K$ is either trivial or isomorphic to
$D_8$ or has normal $p$-rank one.
\end{proposition}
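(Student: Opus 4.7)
The plan is to reduce the claim to Clifford-theoretic pieces and then invoke what is already known for $p$-groups. Since $\gcd(|C|,|P|)=1$, the rational representation ring of $G$ splits according to whether the restriction to $C$ is trivial, and this induces a decomposition of $K(G)$ into the piece supported on characters inflated from $P=G/C$ and the complementary piece supported on characters faithful on $C$. The first piece coincides with the image of the inflation map $K(P)\to K(G)$ and so consists entirely of relations lifted from $G/C$, which are imprimitive. The content of the proposition is therefore that, when $K$ falls outside the three listed classes, every element of the second piece is also imprimitive.

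For the second piece I would apply Clifford theory: every irreducible rational character of $G$ nontrivial on $C$ is induced from $C\rtimes I$, where $I\le P$ is the inertia subgroup of a chosen faithful character of $C$; since $K$ acts trivially on $C$, we always have $K\le I$. A Brauer relation of this type is therefore encoded by virtual character data attached to subgroup chains sitting above $K$ in $P$, and imprimitivity amounts to these data being realizable by induction from $C\rtimes I$ with $I\lneq P$, or by inflation through a central subgroup of $K$.

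The decisive structural step is to show that whenever $K$ contains a characteristic abelian non-cyclic subgroup $N$, averaging over the $N$-action on characters expresses any faithful-on-$C$ relation as induced from $C\rtimes C_P(N)$. This averaging succeeds whenever $N$ supplies at least two distinct linear characters in the direction relevant to the inertia, which is generic. Since the $p$-groups with no non-cyclic characteristic abelian subgroup are exactly those of normal $p$-rank one, one recovers the listed conclusion, except that $D_8$ must be treated separately: its Klein four subgroup is normal, but in the presence of an outer automorphism of $P$ swapping the two cyclic factors, it fails to be characteristic in the precise way needed to make the averaging argument go through.

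The main obstacle will be the verification at $p=2$. For odd $p$, normal $p$-rank one is equivalent to cyclic and the residual analysis is short; for $p=2$ one must go through modular maximal-cyclic groups, various central and direct products, and small exceptions, exhibiting in each case an explicit induction or inflation certificate rendering the corresponding relations imprimitive. Singling out $D_8$ as the only further exception is the delicate endpoint of this case analysis, and it is what fixes the precise form of the statement.
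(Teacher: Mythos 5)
First, note that the present paper does not prove this proposition at all: it is imported verbatim from \cite[Proposition 7.6]{bra1}, so there is no internal proof to compare your sketch against. Judged on its own terms, your proposal contains a gap that is fatal to the stated conclusion. The pivot of your argument is the claim that the $p$-groups with no non-cyclic characteristic abelian subgroup are exactly those of normal $p$-rank one. This is false: by P.~Hall's theorem, the $p$-groups in which every characteristic abelian subgroup is cyclic are precisely the central products of an extraspecial group with a group that is cyclic, dihedral, generalised quaternion or semi-dihedral. In particular, an extraspecial group of order $p^5$ has every characteristic abelian subgroup cyclic, yet has normal $p$-rank at least $2$; your averaging criterion never engages for such a $K$, while the proposition asserts that such a $K$ cannot occur. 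The notion the statement actually turns on is normality (the existence of an elementary abelian $C_p\times C_p$ normal in $K$, and ultimately of one normal in $G$), not characteristicity; conflating the two is exactly the point where $D_8$ and the extraspecial central products part ways, and your framework would be forced to treat all of the latter as exceptions, which the proposition does not allow.

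Second, even where the structural dichotomy is available, the core step --- that averaging over the $N$-action on characters expresses any faithful-on-$C$ relation as induced from $C\rtimes C_P(N)$ --- is asserted rather than proved. No mechanism is given; when $N\leq Z(P)$ the subgroup $C\rtimes C_P(N)$ is all of $G$ and the claim is vacuous; and the hedge ``which is generic'' cannot carry a classification whose entire content lies in the exceptional cases. Likewise, the opening decomposition of $K(G)$ into an inflated piece and a piece ``supported on characters faithful on $C$'' is a statement about the Burnside ring, not the character ring: identifying the non-faithful part of $K(G)$ with the image of $K(G/C)$ under inflation itself requires an argument. As it stands, the proposal is a plausible programme rather than a proof, and its one concrete structural claim is the wrong one.
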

Recall that the normal $p$-rank of a group is defined as the maximal rank of
a normal elementary abelian $p$-subgroup.

\begin{prop}[\cite{Gor-68}, Theorem 5.4.10]\label{prop:prankone}
Let $X$ be a $p$-group with normal $p$-rank one. Then $X$ is one of the following:
\begin{itemize}
\item the cyclic group $C_{p^n}\!=\!\langle c|c^{p^n}\!=\!1\rangle$;
\item the dihedral group $D_{2^{n+1}}\!=\!\langle c,x|c^{2^n}\!=\!x^2\!=\!1, xcx\!=\!c^{-1}\rangle$ with $n\geq 3$;
\item the generalised quaternion group, $Q_{2^{n+1}} \!=\! \langle c,x|c^{2^{n-1}}\!=\!x^2,x^{-1}cx \!=\! c^{-1}\rangle$ with $n\geq 2$;
\item the semi-dihedral group $SD_{2^{n+1}}\!=\!\langle c,x|c^{2^n} \!=\! x^2 \!=\! 1, xcx\!=\!c^{2^{n-1}-1}\rangle$ with $n\geq 3$.
\end{itemize}
\end{prop}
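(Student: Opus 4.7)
The plan is first to translate the normal $p$-rank one hypothesis into the more tractable condition that every abelian normal subgroup of $X$ is cyclic, and then to classify $X$ via its maximal cyclic normal subgroups.

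\emph{Step 1 (Reformulation).} If $A$ is an abelian normal subgroup of $X$, then its socle $\Omega_1(A)$ (the subgroup of elements of order dividing $p$) is characteristic in $A$, hence normal in $X$, and is elementary abelian. The hypothesis forces $|\Omega_1(A)|\leq p$, so $A$ has a unique subgroup of order $p$ and is therefore cyclic. In particular $Z(X)$ is cyclic, and the same applies to any characteristic abelian subgroup encountered below.

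\emph{Step 2 (Odd $p$).} Proceed by induction on $|X|$ to show $X$ is cyclic. Let $Z=\Omega_1(Z(X))$, which has order $p$ by Step~1. I would verify that $X/Z$ again has normal $p$-rank one: a normal elementary abelian subgroup of $X/Z$ pulls back to a normal subgroup $N\leq X$ of class at most $2$ with $[N,N]\subseteq Z$, and for $p$ odd one uses the identity $(xy)^p\equiv x^py^p\pmod{[N,N]^{p}\cdot \gamma_2(N)^{\binom{p}{2}}}$ (i.e.\ regular $p$-group theory) to produce inside $N$ a normal elementary abelian subgroup of the same rank, contradicting the hypothesis on $X$ unless the rank in $X/Z$ is $\leq 1$. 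By induction $X/Z$ is cyclic, so $X$ is a central extension of a cyclic group by a cyclic central subgroup and therefore abelian; Step~1 then forces $X$ cyclic.

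\emph{Step 3 (Even $p$: cutting down the index).} Let $C=\langle c\rangle$ be a maximal cyclic normal subgroup of $X$, of order $2^n$. The centre of $C_X(C)$ is abelian normal in $X$, hence cyclic by Step~1, and it contains $C$; by maximality $Z(C_X(C))=C$. Iterating this observation up the upper central series of $C_X(C)$ gives $C_X(C)=C$, so $X/C$ embeds in $\Aut(C)=(\Z/2^n\Z)^\times$. For $n\leq 2$ a direct check suffices; for $n\geq 3$ one has $\Aut(C)\cong C_2\times C_{2^{n-2}}$. Any element $y\in X$ acting on $C$ by $1+2^{n-1}$ would centralise $c^{2^{n-1}}$, and after replacing $y$ by a suitable $C$-coset representative one gets $y^2=1$; then $\langle c^{2^{n-1}},y\rangle$ is a normal Klein four subgroup, contradicting the hypothesis. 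This rules out every non-identity element whose image in $\Aut(C)$ is a square, leaving $[X:C]\leq 2$.

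\emph{Step 4 (Even $p$: the three extensions).} When $[X:C]=2$, pick $x\in X\setminus C$, inducing an involutory automorphism $\alpha\in\{-1,\,2^{n-1}-1,\,2^{n-1}+1\}$ of $C$. The case $\alpha=2^{n-1}+1$ is ruled out as in Step~3. For $\alpha=-1$, the element $x^2$ is $\alpha$-fixed, so lies in $\langle c^{2^{n-1}}\rangle$; after replacing $x$ by $xc^i$ one reduces to $x^2\in\{1,c^{2^{n-1}}\}$, giving respectively $D_{2^{n+1}}$ and $Q_{2^{n+1}}$. For $\alpha=2^{n-1}-1$, a similar coset adjustment (the two a priori possibilities for $x^2$ differ by a non-square in $C$, and only one survives the consistency check $xcx=c^{2^{n-1}-1}$ after re-expanding $x^2$) forces $x^2=1$, yielding $SD_{2^{n+1}}$. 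The lower bounds $n\geq 2,3$ appearing in the statement are exactly the requirement that $\alpha\neq 1$ be defined.

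The main obstacle is Step~3: showing that $X/C$ has order at most $2$. The structural input that makes this work, namely that the $2$-torsion of $\Aut(C_{2^n})$ is a Klein four group each of whose three non-trivial elements interacts differently with normal $p$-rank one, is also what distinguishes the $p=2$ case from Step~2 and forces the presence of the three exceptional families. Step~4 is then essentially a finite calculation with extensions of $C_2$ by a cyclic $2$-group.
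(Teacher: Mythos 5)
The paper offers no proof of this proposition at all: it is quoted verbatim from Gorenstein (Theorem 5.4.10), so there is no internal argument to compare yours against. Your proposal follows the standard textbook route for this classical result --- reformulate normal $p$-rank one as ``every normal abelian subgroup is cyclic'', dispose of odd $p$, then for $p=2$ take a self-centralising cyclic normal subgroup $C$ and analyse $X/C$ inside $\Aut(C)$ --- and the overall strategy is sound. But several steps, as written, do not quite go through and need repair.

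First, in Step 3 the ``iteration up the upper central series'' is not legitimate as stated: $Z_2(C_X(C))$ satisfies $[Z_2,Z_2]\leq Z(C_X(C))$, so it has class at most $2$ but need not be abelian, and Step 1 does not apply to it. The correct move is: if $Y=C_X(C)>C$, then $Y/C$ is a nontrivial normal subgroup of the $p$-group $X/C$, hence meets $Z(X/C)$; pulling back such an element $u$ gives $\langle C,u\rangle$ normal in $X$ and abelian (every element of $Y$ centralises $C$), contradicting maximality of $C$. Second, you assert without justification that $\langle c^{2^{n-1}},y\rangle$ is \emph{normal} in $X$; this is true, but only because it equals $\Omega_1(\langle C,y\rangle)$ (in the modular group $\langle C,y\rangle$ the elements of order dividing $2$ form a subgroup), $\Omega_1$ is characteristic, and $\langle C,y\rangle$ is normal in $X$ since $X/C\leq\Aut(C)$ is abelian --- this needs to be said, as normality is the whole point. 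Third, your closing remark about the lower bounds is wrong for the dihedral family: $\alpha=-1$ is perfectly well defined for $n=2$, and your Step 4 construction then produces $D_8$; the reason $D_8$ is excluded from the list (and is carried as a separate case throughout the paper) is that it contains a normal Klein four subgroup and so violates the hypothesis. Your ``direct check for $n\le 2$'' must make this exclusion explicit. Finally, in Step 2 the subgroup you actually produce is $\Omega_1(N)$, of exponent $p$ and order at least $p^{r}$, which need not be elementary abelian (it can be nonabelian of class $2$); the fix is that any normal subgroup of order at least $p^2$ in a $p$-group contains an $X$-normal subgroup of order exactly $p^2$, which is automatically elementary abelian of rank $2$, and that already gives the contradiction. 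With these repairs the argument is a correct proof of the cited theorem.
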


\begin{notation}
If $K$ is trivial, then $\Prim(G)$ is described explicitly by
\cite[Proposition 6.5]{bra1}. From now on, assume that $K$ is either
isomorphic to $D_8$ or has normal $p$-rank one. In particular, $K$ contains
a unique subgroup of order $p$ that is normal in $G$, which we will denote by
$\centralCp$. Let $\cH$ denote the set of those subgroups\footnote{This
definition slightly differs from the one
in \cite{bra1}, where $\cH$ was defined as a set of representatives of
conjugacy classes of subgroups.}
of $P$ that do not
contain $\centralCp$, and let $\cH_m$ be the set of elements of $\cH$ of
maximal size. Let $C_K$ be either $K$ if $K$ is cyclic, or a cyclic index
two subgroup of $K$ that is normal in $G$ otherwise (such an index two 
subgroup is unique unless $K=Q_8$, in which case there may be three such
groups, see \cite[Notation 7.9]{bra1} for details). Set $\bigC=CC_K$.
\end{notation}

\begin{lemma}[\cite{bra1}, Lemma 7.27]\label{lem:genKnontriv}
The group $\Prim(G)$ is generated by relations of the form
\[
\Theta=\sum_{\tilde{C}\leq \bigC} \mu(|\tilde{C}|)(\tilde{C}H-\tilde{C}H'),
\]
for $H,H'\in \cH_m$, where $\mu$ denotes the Moebius function.

\end{lemma}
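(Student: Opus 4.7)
\emph{Strategy.} The plan is to exploit the semidirect product structure of $G$ and the normality of $\bigC$ in $G$. Every subgroup of $G$ is, up to conjugation by $C$, of the form $\tilde{C}H$ for some $\tilde{C}\leq\bigC$ and $H\leq P$. Since $\centralCp\normal G$, any subgroup of $G$ containing $\centralCp$ is inflated from $G/\centralCp$, so modulo lifted relations, any primitive class has a representative of the form $\sum_{H\in\cH,\,\tilde C\leq\bigC} a_{\tilde C, H}\,\tilde C H$. This is the decomposition the argument will manipulate.

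\emph{Verifying that $\Theta$ is a relation.} Because $|C|=q$ and $|C_K|$ are coprime, the normal subgroup $\bigC = C\times C_K$ is cyclic. Applied to $\bigC$ and induced up to $G$, the classical Artin--M\"obius identity produces permutation-character expressions of the form $\sum_{\tilde C\leq\bigC}\mu(|\tilde C|)\chi_{\tilde CH}$ whose value at $g\in G$ is determined entirely by the interaction of $H$ with $\bigC$, not by $H$ itself. Taking the difference for two choices $H, H'\in\cH_m$ then yields a virtual permutation character that vanishes on every element of $G$, which shows $\Theta_{H,H'}\in K(G)$ and hence defines a class in $\Prim(G)$.

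\emph{Completeness and main obstacle.} The substantive task is to show that the $\Theta_{H,H'}$ generate $\Prim(G)$. Given a representative $\sum a_{\tilde C, H}\tilde C H$ of a primitive class (with $H\in\cH$ by the reduction above), I would apply M\"obius inversion on the lattice of subgroups of $\bigC$ to extract, for each $H\in\cH$, a single ``Artin coefficient'' attached to $H$. Relations induced from proper subgroups of $G$ would then be used to kill the contribution of non-maximal $H$, leaving one coefficient per $H\in\cH_m$; the surviving global relation condition forces these coefficients to lie in the $\Z$-span of the differences $\Theta_{H,H'}$. The main obstacle is precisely the reduction to $\cH_m$: for $H\in\cH$ of non-maximal size, one must exhibit the analogous sum $\sum_{\tilde C\leq\bigC}\mu(|\tilde C|)\tilde C H$ as a relation induced from some proper overgroup of $H$ in $P$. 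Carrying this out rigorously requires a case-by-case analysis of the lattice of $\centralCp$-free subgroups of $P$, making use of the classification of $K$ as cyclic, dihedral, generalised quaternion, semi-dihedral, or $D_8$ from Proposition \ref{prop:prankone}, together with the explicit choice of $C_K$ in each case.
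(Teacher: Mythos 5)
This lemma is not proved in the paper at all: it is imported verbatim from \cite{bra1} (Lemma 7.27), where it is a consequence of the general machinery developed there for quasi-elementary groups. So there is no in-paper argument to compare against, and your proposal has to stand on its own. As written, it does not: it is a strategy outline in which every load-bearing step is asserted or deferred rather than proved. Concretely: (i) the claim that $\sum_{\tilde C\leq\bigC}\mu(|\tilde C|)\chi_{\tilde CH}$ has character values ``determined entirely by the interaction of $H$ with $\bigC$, not by $H$ itself'' is exactly the content of $\Theta_{H,H'}$ being a relation, and you give no argument for it; it is far from obvious, since $\chi_{\tilde CH}(g)$ for an individual $\tilde C$ certainly does depend on $H$ (e.g.\ on whether $g$ is conjugate into $H$), and the cancellation only happens after summing over the full subgroup lattice of $\bigC$ and using that $H$ has maximal size in $\cH$. (ii) The opening reduction is also flawed as stated: from the fact that $G/U$ is inflated from $G/\centralCp$ whenever $U\supseteq\centralCp$ you cannot conclude that a primitive class has a representative supported on $\tilde CH$ with $H\in\cH$, because splitting a relation $\Theta=\Theta_1+\Theta_2$ according to whether the stabiliser contains $\centralCp$ does not make $\Theta_1$ and $\Theta_2$ individually elements of $K(G)$ --- note also that the $\Theta$ of the lemma itself involves terms $\tilde CH$ with $\centralCp\leq\tilde C\leq C_K$.

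(iii) Most importantly, the completeness half --- which is the entire substance of the lemma --- is explicitly not carried out: ``I would apply M\"obius inversion\dots'', ``relations induced from proper subgroups would then be used to kill\dots'', ``carrying this out rigorously requires a case-by-case analysis\dots''. The reduction from general $H\in\cH$ to $H\in\cH_m$, and the identification of the surviving coefficients with the $\Z$-span of the differences $\Theta_{H,H'}$, are precisely where the work lies, and in \cite{bra1} they rest on the structural results about imprimitive relations (induction from subgroups containing $C$, Bouc-type generators for the kernel $K$, etc.) rather than on a direct lattice computation. What you have written is a plausible road map, not a proof; to make it one you would need to supply the character computation in (i), repair the reduction in (ii), and actually execute the induction/elimination argument in (iii).
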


\begin{proposition}\label{prop:semidir}
If $G$ has a primitive relation, then $P$ is a semi-direct product by $K$,
$P=K\rtimes A$, where $A$ acts faithfully on $C$.
\end{proposition}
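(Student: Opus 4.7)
The strategy is to exploit the structure of the maximal elements of $\cH$, using Lemma~\ref{lem:genKnontriv} to translate the assumption $\Prim(G) \neq 0$ into constraints on $P$. The key preparatory observation is that $\centralCp$ is a central subgroup of $P$ of order $p$: it is normal in $P$ (being normal in $G$), and $P$ acts on it through a homomorphism to $\Aut(\centralCp) \cong C_{p-1}$, which must be trivial since $P$ is a $p$-group. It follows that a subgroup $H \leq P$ lies in $\cH$ if and only if $H \cap \centralCp = 1$, and in particular $H \cap K$ is then a subgroup of $K$ not containing $\centralCp$.

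Next I analyse an arbitrary $H \in \cH_m$. A direct inspection of the groups listed in Proposition~\ref{prop:prankone}, together with the case $K = D_8$, shows that any subgroup of $K$ not containing $\centralCp$ is either trivial, or, in the dihedral, semi-dihedral and $D_8$ cases, is of order two generated by a non-central involution (a ``reflection''). Using the centrality of $\centralCp$, I then argue that maximality forces $HK = P$: if $HK \subsetneq P$ and $y \in P \setminus HK$, then for a suitable choice of $z \in \centralCp$ the subgroup $\langle H, yz \rangle$ strictly contains $H$ while still meeting $\centralCp$ trivially, contradicting maximality. Hence $H$ surjects onto $P/K \cong C_{p^m}$ and $|H| \in \{p^m, 2p^m\}$.

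The proof then divides according to the structure of $K$. If $K$ is cyclic or generalised quaternion, then the previous step forces $H \cap K = 1$, so $H$ is a complement to $K$ in $P$, giving $P = K \rtimes H$; since $K = \ker(P \to \Aut C)$, the complement $H$ acts faithfully on $C$, and we are done. If $K$ is dihedral, semi-dihedral or $D_8$, then either some $H \in \cH_m$ still has $H \cap K = 1$ (and we conclude as before), or every $H \in \cH_m$ satisfies $|H \cap K| = 2$; in this latter sub-case the plan is to show that all such $H$ lie in a single $G$-conjugacy class, so that every generator $\Theta_{H,H'}$ of $\Prim(G)$ supplied by Lemma~\ref{lem:genKnontriv} vanishes, contradicting $\Prim(G) \neq 0$.

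The main obstacle is precisely this last sub-case: one needs enough control on the cyclic subgroups of $P$ of order $2p^m$ that contain a given reflection of $K$, and on how $G$ permutes them by conjugation (in particular via the $C$-action on reflections), to rule out non-trivial primitive relations whenever $P$ fails to split over $K$.
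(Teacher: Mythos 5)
Your overall plan---reduce to the maximal elements of $\cH$ via Lemma~\ref{lem:genKnontriv} and analyse $H\cap K$ case by case---is in the right spirit, but the pivotal step is false. You claim that maximality of $H\in\cH_m$ forces $HK=P$, by asserting that for $y\in P\setminus HK$ a suitable $z\in\centralCp$ makes $\langle H,yz\rangle$ a strictly larger subgroup still avoiding $\centralCp$. No justification is given, and none is possible: by the very definition of $\cH_m$, \emph{every} subgroup strictly containing $H$ meets $\centralCp$, so the argument assumes exactly what is in question (the usual ``adjust by a central element'' trick needs $y$ to normalise $H$ with $y^p\in H$, and even then it only yields $y\in H\centralCp$, not a contradiction with $HK\neq P$). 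The conclusion itself fails: take $P=\langle c,h\mid c^4=1,\ h^4=c^2,\ hch^{-1}=c^{-1}\rangle$ of order $16$ (the modular group $M_{16}$ in disguise), $K=\langle c\rangle\cong C_4$, $q=5$, with $c$ acting trivially on $C$ and $h$ through an automorphism of order $4$. Every element mapping onto a generator of $P/K$ has order $8$ with fourth power $c^2$, so $P$ does not split over $K$; the only non-trivial subgroups of $P$ avoiding $\centralCp=\langle c^2\rangle$ are $\langle ch^2\rangle$ and $\langle c^3h^2\rangle$, and for these $HK=\langle c,h^2\rangle$ has index $2$ in $P$. The paper sidesteps this by a dichotomy rather than a maximality argument: since $\Aut C$ is cyclic, either some $H\in\cH_m$ has full image in $\Aut C$ (equivalently $HK=P$), or all of them lie in the preimage of the index-$p$ subgroup of $\Aut C$, in which case every generator from Lemma~\ref{lem:genKnontriv} is induced from a proper subgroup and is imprimitive by \cite[Proposition 3.7]{bra1}. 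In the example above the second alternative holds and there is nothing to prove.

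Beyond this, you leave the one genuinely delicate sub-case ($K$ dihedral or semi-dihedral with $H\cap K$ a non-central involution) as an acknowledged obstacle, proposing to show all such $H$ are $G$-conjugate; that is not carried out and is not how the paper proceeds. The paper's resolution is short: given a single $A\in\cH$ with full image in $\Aut C$, one has $A\cap C_K=1$ (the only involution of $C_K$ is central), hence $P=C_K\rtimes A$ with $A\cap K$ either trivial or generated by a non-central involution $x$; since the automorphism of $C_K$ induced by $x$ is not a square in $\Aut C_K$, $x$ cannot be a proper power in $A$, which forces $A$ to split over $\langle x\rangle$ and hence $P$ to split over $K$. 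As written, every branch of your argument passes through the false claim $HK=P$, so the proposal does not establish the proposition in any of the cases, although the cyclic and quaternion cases would be repaired by substituting the paper's dichotomy for your maximality step.
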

\begin{proof}
Since $|C|$ is prime, $\Aut C$ is cyclic. So for a subgroup $H$ of $P$,
either the image of $H$ under $P\rightarrow \Aut C$ is equal to that of
$P$, or $H$ is contained in the pre-image under $P\rightarrow \Aut C$
of the unique index $p$ subgroup of $\Aut C$. Thus, either $\cH_m$
contains a subgroup whose image under $P\rightarrow\Aut C$ is equal to
that of $P$, or the relations of Lemma \ref{lem:genKnontriv} are all
imprimitive by \cite[Proposition 3.7]{bra1}.

Suppose for the rest of the proof that $\cH$ contains a subgroup $A$ of $P$
whose image in $\Aut C$ is equal
to  that of $P$. If $A$ intersects $K$ trivially, then $P$ must be a
semi-direct product $K\rtimes A$. In particular, this proves the claim 
in the cases that $K$ is cyclic or generalised quaternion, since in those
cases intersecting $K$ trivially is equivalent to not containing $\centralCp$.

Suppose that
$K$ is dihedral or semi-dihedral (in particular, $P$ is a 2-group).
Then $P$ is a semi-direct product by $C_K$. But since the automorphism of
$C_K$ given by conjugation by a non-central involution of $K$ is not
divisible by 2 in $\Aut C_K$, this implies that $P$ is a semi-direct product
by $K$ and the proof is complete.
\end{proof}

\begin{lemma}[\cite{bra1}, Corollary 7.4]\label{lem:ZP}
If $K$ is non-trivial and $P$ has cyclic centre, then $G$ has no primitive
relations.
\end{lemma}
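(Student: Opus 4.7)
The plan is to apply Lemma~\ref{lem:genKnontriv}: it suffices to show that under the cyclic-centre hypothesis, each candidate generator
\[
\Theta=\sum_{\tilde C\le\bigC}\mu(|\tilde C|)(\tilde C H-\tilde C H'),\qquad H,H'\in\cH_m,
\]
is imprimitive, i.e.\ induced from a proper subgroup of $G$ and/or lifted from a proper quotient.

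The first step is to pin down the role of the centre. I would observe that $\centralCp$ is central not just in $P$ but in all of $G$: it lies in $Z(P)$ because any normal subgroup of order $p$ in a $p$-group is central, and it centralises $C$ because $\centralCp\le K$. The same $p$-group argument applied to $Z(P)$, which maps to $\Aut C$ (a cyclic group of order coprime to $p$), gives the stronger statement $Z(P)\le K$, and hence $Z(P)\le Z(G)$. Since $Z(P)$ is cyclic, $\centralCp$ is its unique subgroup of order $p$, and therefore for any $H\le P$ one has $\centralCp\le H$ if and only if $H\cap Z(P)\ne 1$. Every $H\in\cH_m$ thus meets $Z(P)$ trivially, so the product $HZ(P)$ is a subgroup of $P$ properly containing $H$ and containing $\centralCp$.

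From here I would decompose $\Theta$ along the inclusion-exclusion variable according to whether $\centralCp\le\tilde C$. Because $H$ avoids $\centralCp$, the terms with $\centralCp\le\tilde C$ are exactly those in which $\tilde CH\supseteq\centralCp$, and they collectively form a relation visibly lifted from $B(G/\centralCp)$. The remaining terms I would aim to pair via the involution $\tilde C\leftrightarrow\tilde C\centralCp$ on squarefree-order subgroups of $\bigC$ missing $\centralCp$; since $\mu(|\tilde C\centralCp|)=-\mu(|\tilde C|)$, the weights flip sign, and I would use the centrality of $\centralCp$ in $G$ to re-express the resulting signed sum as a difference of $G$-sets induced from the proper subgroup $\bigC\cdot HZ(P)\le G$, and similarly for $H'$. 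Both pieces are then imprimitive, which yields $\Prim(G)=0$.

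The main obstacle will be handling the boundary case $HZ(P)=P$, i.e.\ $|H|=|P|/|Z(P)|$, which forces $P=H\times Z(P)$ (as $Z(P)$ is central) and hence $G=(C\rtimes H)\times Z(P)$ to split as a direct product with a nontrivial cyclic $p$-group factor. In that situation the induction argument above is vacuous, and I would instead argue directly: the Burnside ring of such a direct product decomposes compatibly with $\Prim$, and the primitive relations of the cyclic factor $Z(P)$ are trivial (a cyclic $p$-group has no primitive relations by \cite[Proposition~6.5]{bra1} via Bouc's results), so any primitive relation of $G$ must descend to one of $C\rtimes H$, which is a proper subquotient and hence yields imprimitivity in $G$. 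The careful Moebius bookkeeping in the generic case, together with this degenerate case, is where the bulk of the work lies.
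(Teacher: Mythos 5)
First, a point of comparison: the paper does not prove this lemma at all --- it is imported verbatim from \cite[Corollary 7.4]{bra1} --- so there is no in-paper argument to measure yours against, and I can only judge the sketch on its own terms. Its central step has a genuine gap. To show $\Theta$ is imprimitive you must exhibit it as a $\Z$-linear combination of \emph{Brauer relations} of proper subquotients, induced or inflated; it is not enough that the summands lie in the image of the inflation or induction maps on Burnside rings, which is all that ``visibly lifted from $B(G/\centralCp)$'' gives you. Nothing in the sketch addresses why either piece maps to zero in $R_\Q(G)$. Worse, the bookkeeping collapses: pairing the terms with $\centralCp\not\le\tilde C$ against their partners $\tilde C\centralCp$ (with the sign flip $\mu(|\tilde C\centralCp|)=-\mu(|\tilde C|)$) produces exactly the negative of the piece with $\centralCp\le\tilde C$, so after cancellation your decomposition returns $\Theta=\theta_H-\theta_{H'}$ with $\theta_H=\sum_{\tilde C\le\bigC}\mu(|\tilde C|)\tilde CH$, i.e.\ you are back where you started. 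And $\theta_H$ on its own is \emph{not} a relation of $\bigC HZ(P)$: already for $H=1$ and $\bigC$ cyclic of order $pq$, the virtual character $\sum_{\tilde C}\mu(|\tilde C|)\Q[\bigC/\tilde C]$ is the sum of the faithful characters of $\bigC$, which is nonzero. A telling sanity check: the only place the cyclic-centre hypothesis enters your argument is via $HZ(P)\supsetneq H$, but every subgroup occurring in $\theta_H$ already lies in $\bigC H$ whether or not $Z(P)$ is cyclic, so if the manipulation proved imprimitivity it would do so for every $G$ in this paper, contradicting Theorem \ref{thm:main}.

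There are also incorrect auxiliary claims. $\Aut C\cong C_{q-1}$ does not have order coprime to $p$: the image of $P$ in $\Aut C$ is a non-trivial $p$-group whenever $K\neq P$, so $p\mid q-1$. Consequently $Z(P)\le K$ is false in general --- for example $P=C_{p^2}$ acting on $C$ through its quotient $C_p$ has $K=C_p$ non-trivial and cyclic centre $Z(P)=P\not\le K$ --- and with it the identification $G=(C\rtimes H)\times Z(P)$ on which your degenerate case rests. (What is true, and all that the correct parts of your argument need, is $\centralCp\le Z(P)$ and $\centralCp\le Z(G)$, giving $H\cap Z(P)=1$ for $H\in\cH_m$.) Finally, even when $HZ(P)\neq P$, the overgroup $\bigC HZ(P)$ from which you propose to induce need not be proper in $G$: since $G=\bigC P$ with $\bigC$ normal, it is proper if and only if $C_KHZ(P)\neq P$, which fails for instance whenever $K=C_K$ and $H$ is a complement of $K$ in $P$. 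So the boundary analysis does not isolate all the problematic configurations, and the argument as a whole does not go through.
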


\begin{hypothesis}\label{hyp:nontrivker}
From now on, assume that $P$ is a semi-direct product by $K$ and that the
conjugation action of the quotient on $K$ has a non-trivial kernel.
\end{hypothesis}
Note that if $|K|=p$, and in particular $P$ is a direct product, then
\cite[Proposition 7.29]{bra1} implies that $\Prim(G)\cong\F_p^{p-2}$.
In the case that $|K|\neq p$, we recall the description of $\Prim(G)$ in the
special case under consideration that we
want to make explicit.

\begin{thm}[\cite{bra1}, Theorem 7.30]\label{thm:Knontriv}
Assume that $|K| > p$. Define
a graph $\Gamma$ whose vertices are the elements of $\cH_m$ and with an edge
between $H,H'\in \cH_m$ if one of the following applies:
\begin{enumerate}
\item the subgroup generated by $H$ and $H'$ is a proper subgroup of $P$;
\item\label{edge:quo} the intersection $H\cap H'$ is of index $p$
in $H$ and in $H'$, and $HH'/H\cap H'$ is either dihedral, or the Heisenberg 
group of order $p^3$.
\end{enumerate}
Let $d$ be the number of connected components of $\Gamma$.
Then $\Prim(G)\!\iso\!(C_p)^{d-1}$, generated by relations
$\Theta=\sum_{\tilde{C}\leq \bigC} \mu(|\tilde{C}|)(\tilde{C}H-\tilde{C}H')$
for $H,H'\in\cH_m$ corresponding to
distinct connected components of the graph.
\end{thm}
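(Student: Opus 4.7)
The plan is to start from the generating set for $\Prim(G)$ given by Lemma~\ref{lem:genKnontriv}. Denote by $\Theta_{H,H'}$ the element attached to an unordered pair $\{H,H'\}\subseteq\cH_m$. The identities $\Theta_{H,H'}=-\Theta_{H',H}$ and $\Theta_{H,H'}+\Theta_{H',H''}=\Theta_{H,H''}$ hold in $B(G)$, so the subgroup $T\leq K(G)$ generated by the $\Theta_{H,H'}$ is a quotient of the degree-zero part of $\Z^{\cH_m}$, i.e.\ the free abelian group on the vertex set of $\Gamma$ modulo its augmentation. I would reduce the theorem to three statements: (a)~the image of $T$ in $\Prim(G)$ is annihilated by $p$; (b)~if $\{H,H'\}$ is an edge of $\Gamma$, then $\Theta_{H,H'}$ is imprimitive; (c)~if $H,H'$ lie in distinct connected components of $\Gamma$, then $\Theta_{H,H'}$ is non-trivial in $\Prim(G)$, and the images coming from different pairs of components are $\F_p$-linearly independent.

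For (a), I would show that $p\cdot\Theta_{H,H'}$ lies in the subgroup generated by relations induced from proper overgroups of index $p$ of $H$ (or $H'$); alternatively, this can be derived from the corresponding statement for $p$-groups via the reduction map from $B(G)$ to $B(P)$ obtained by dividing out by $C$. For (b), the two edge conditions are handled separately. If $\langle H,H'\rangle<P$, then $\tilde P:=C\langle H,H'\rangle$ is a proper subgroup of $G$ containing every point stabiliser appearing in $\Theta_{H,H'}$, and writing each $G/(\tilde C H)$ as induced from a $\tilde P$-set realises $\Theta_{H,H'}$ as the image of a relation in $K(\tilde P)$. Under condition~(2), the subquotient $HH'/(H\cap H')$ is dihedral or Heisenberg of order $p^3$, each of which carries a canonical primitive Brauer relation (cf.~\cite{Bouc}); I would verify by a direct double-coset calculation, using M\"obius inversion over the subgroups of $\bigC$, that inflating this relation to $HH'$ and inducing it to $G$ reproduces $\Theta_{H,H'}$ up to sign, so again $\Theta_{H,H'}$ is imprimitive.

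The main obstacle is step (c). Here one needs to show that every imprimitive relation supported on subgroups of the form $\tilde C H$, $H\in\cH_m$, decomposes as a combination of edge relations and $p$-multiples. I would analyse the two possible sources of imprimitivity: a relation induced from a proper subgroup $Q<G$ contributes non-trivially to $T$ only if $Q$ engulfs more than one element of $\cH_m$, which forces $Q\supseteq \tilde P$ for some pair as in condition~(1); a relation inflated from $G/N$ contributes non-trivially only when $N$ collapses some $HH'/(H\cap H')$ to a dihedral or order-$p^3$ Heisenberg quotient as in condition~(2). The hypothesis $|K|>p$ is essential in ruling out degenerate contributions and corresponds to the assumption excluded in \cite[Proposition 7.29]{bra1}. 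Combining (a)--(c) identifies $\Prim(G)$ with $\F_p$-valued functions on the set of connected components of $\Gamma$ modulo constants, giving $(C_p)^{d-1}$; the bulk of the technical work sits in making the claim about sources of imprimitivity precise, which requires invoking the full classification of primitive Brauer relations in smaller quasi-elementary subquotients from \cite{bra1}.
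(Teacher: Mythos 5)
First, a point of order: the paper does not prove this theorem at all --- it is quoted verbatim from \cite[Theorem 7.30]{bra1} and used as a black box throughout, so there is no in-paper proof to measure your attempt against. Judged on its own terms, your skeleton is the natural one: the telescoping identities $\Theta_{H,H'}+\Theta_{H',H''}=\Theta_{H,H''}$, and the reduction to (a) $p$-torsion, (b) ``edges give imprimitive relations'', (c) ``distinct components give independent non-trivial classes''. Step (b) is essentially workable, though for condition (1) you should note that only the square-free $\tilde{C}\leq\bigC$ contribute to $\Theta$, and that $\langle H,H'\rangle$, being strictly larger than the maximal element $H$ of $\cH$, automatically contains $\centralCp$; only then do all the $\tilde{C}H$, $\tilde{C}H'$ actually lie in the proper subgroup $\bigC\langle H,H'\rangle$ from which you want to induce (your $C\langle H,H'\rangle$ as written does not contain $\centralCp H$ a priori).

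The genuine gap is in (c), which is where all the content of the theorem sits. You propose to ``analyse the two possible sources of imprimitivity'' of relations ``supported on subgroups of the form $\tilde{C}H$'', but imprimitivity of $\Theta_{H,H'}$ means only that it is \emph{some} $\Z$-linear combination of induced and lifted relations, and the individual terms of such a combination need not be supported on the $\tilde{C}H$ at all: arbitrary cancellation among subgroups of $G$ can occur, so classifying which single induced or lifted relations meet the $\tilde{C}H$ does not bound the intersection of the imprimitive subgroup with the span of the $\Theta$'s. The standard (and, in \cite{bra1}, the actual) way to close this is to exhibit an explicit homomorphism $K(G)\rightarrow\F_p^{d-1}$ that provably annihilates every relation induced from a proper subgroup and every relation lifted from a proper quotient, yet separates $\Theta_{H,H'}$ for $H,H'$ in distinct components of $\Gamma$; your sketch contains no such invariant, and constructing it is the hard part. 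Moreover, your fallback --- invoking ``the full classification of primitive Brauer relations in smaller quasi-elementary subquotients from \cite{bra1}'' --- is circular here, since \cite[Theorem 7.30]{bra1} \emph{is} that classification for the groups in question. As it stands, the proposal establishes at most that $\Prim(G)$ is a quotient of $(C_p)^{d-1}$, not that it is isomorphic to it.
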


\begin{remark}\label{rmrk:conjugateGroups}
If $H$ is a proper subgroup of $P$, then its image in the Frattini quotient
of $P$ is also a proper subgroup. Since the Frattini quotient is abelian,
$P$ acts trivially on it by conjugation, so if $H'$ is any conjugate of
$H$, $H$ and $H'$ never generate the whole of $P$. Thus, conjugate subgroups
always lie in the same connected component of $\Gamma$.
\end{remark}
\begin{notation}
We will retain the notation $\Gamma$ for the graph described in Theorem
\ref{thm:Knontriv} for the rest of the paper.
\end{notation}
\section{$K$ is cyclic}

Let 
\[
  K=\langle c|c^{p^n}=1\rangle,\;n\geq 2.
\]
The only subgroup of $K$ that does not contain $\centralCp$ is the trivial
group, so $\cH$ consists of subgroups of $P$ that intersect $K$ trivially.
We have $P=K\rtimes A$ where
$A=\langle h\rangle$ is cyclic of order $p^m$, acts faithfully on $C$ and
acts as $hch^{-1} = c^j$ on $K$ with $j$ of order dividing $p^{m-1}$ in
$(\bZ/p^n\bZ)^\times$ and in particular $j\equiv 1 \pmod p$.

All elements of $\cH_m$ are of the form $H_\alpha=\langle c^\alpha h\rangle$
for $0\leq \alpha \leq p^n-1$. Conversely, such an $H_\alpha$ is a
complement of $C\times K$ in $G$ if and only if $H_\alpha\cap K=\{1\}$ if
and only if $c^\alpha h$ has order $p^m$. 

\begin{lem}\label{lem:ladicVal}
Let $l$ be a prime and let $n\equiv 1 \pmod l$ if $l$ is odd and
$n\equiv 1 \pmod 4$ if $l=2$. Then
\[v_l(n^s - 1) = v_l(n - 1) + v_l(s)\]
for any positive integer $s$, where $v_l$ is the normalised $l$-adic
valuation.
\end{lem}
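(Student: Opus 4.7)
The plan is to prove this lifting-the-exponent style identity by first reducing to the case where $s$ is a power of $l$, then settling the prime case by a direct expansion, and finally proceeding by induction on $v_l(s)$. Write $n = 1 + l^a u$ with $a = v_l(n-1)$ and $\gcd(u,l)=1$; note that $a \geq 1$ in general and $a \geq 2$ when $l=2$.

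First, for $s$ coprime to $l$ I would use the factorisation $n^s - 1 = (n-1)(1 + n + n^2 + \cdots + n^{s-1})$. Since $n \equiv 1 \pmod l$, the second factor is congruent to $s \pmod l$, hence a unit at $l$. This yields $v_l(n^s-1) = v_l(n-1)$, which agrees with the claim because $v_l(s)=0$.

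Second, for the base case $s=l$: when $l$ is odd, expand $n^l = (1 + l^a u)^l$ by the binomial theorem. The linear term contributes $l \cdot l^a u = l^{a+1} u$ of valuation exactly $a+1$; each remaining term $\binom{l}{k} l^{ka} u^k$ for $2 \le k \le l-1$ has valuation $1 + ka \ge 1 + 2a > a+1$, and the final term $l^{la} u^l$ has valuation $la > a+1$ since $a(l-1) \ge 2$. Hence $v_l(n^l - 1) = a+1$. When $l = 2$, instead factor $n^2 - 1 = (n-1)(n+1)$: by hypothesis $a \ge 2$, so $n+1 = 2 + 2^a u \equiv 2 \pmod 4$, giving $v_2(n+1) = 1$ and therefore $v_2(n^2 - 1) = a+1$.

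Third, for the general case, write $s = l^b t$ with $\gcd(t,l) = 1$. Applying the coprime case with base $n^{l^b}$ (which is again $\equiv 1 \pmod l$, resp.\ $\pmod 4$) gives $v_l(n^s - 1) = v_l(n^{l^b} - 1)$, so it remains to show $v_l(n^{l^b} - 1) = a + b$ by induction on $b$. The inductive step applies the base case to $m := n^{l^{b-1}}$: by induction $v_l(m - 1) = a + b - 1$, and since this is $\ge 1$ (resp.\ $\ge 2$ for $l=2$, using $a \ge 2$), the hypothesis of Case~2 is satisfied for $m$, whence $v_l(m^l - 1) = (a+b-1) + 1 = a+b$, as required.

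The one genuinely delicate point is the $l = 2$ base case, which is precisely why the stronger congruence $n \equiv 1 \pmod 4$ appears in the hypothesis: without it the factor $n+1$ can pick up extra $2$-adic valuation (as in $n=3$, $s=2$, where $v_2(8) = 3 \neq v_2(2) + v_2(2)$), and the clean additivity fails. All other steps are routine manipulations with the binomial expansion and geometric series.
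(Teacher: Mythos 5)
Your proof is correct, but it takes a genuinely different route from the paper's. The paper's argument is a three-line appeal to the filtration-preserving isomorphism $\exp\colon (r\Z_l,+)\to(1+r\Z_l,\times)$, with $r=l$ for odd $l$ and $r=4$ for $l=2$: since $v_l(t)=v_l(\exp(t)-1)$ and raising to the $s$-th power on the multiplicative side corresponds to multiplication by $s$ on the additive side, the identity drops out immediately. Your argument is the elementary ``lifting the exponent'' route: reduce to $s$ a power of $l$ via the geometric-series factorisation, settle $s=l$ by binomial expansion (resp.\ the factorisation $n^2-1=(n-1)(n+1)$ when $l=2$), and induct on $v_l(s)$. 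The estimates all check out --- in particular $v_l\bigl(\tbinom{l}{k}l^{ka}u^k\bigr)\ge 1+2a>a+1$ for $2\le k\le l-1$, the bound $la\ge a+2$ for the top term, and the observation that powers of $n$ again satisfy the hypothesis, which is what makes the induction legitimate. What the paper's approach buys is brevity and a conceptual explanation of the hypotheses ($\exp$ is an isomorphism onto $1+4\Z_2$ but not onto $1+2\Z_2$, which is precisely your remark that the factor $n+1$ can otherwise pick up extra $2$-adic valuation, as in $n=3$, $s=2$); what yours buys is complete self-containedness, requiring nothing beyond the binomial theorem.
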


\begin{proof}
Set $r=l$ if $l$ is odd and $r=4$ if $l=2$. The isomorphism
$$
\exp: (r\bZ_l,+)\rightarrow (1 + r\bZ_l,\times)$$
preserves the standard filtrations on
both hand sides, i.e. it satisfies $v_l(t) = v_l(\exp(t)-1)$.
Raising to the $s$-th power on the right hand side corresponds to 
multiplying by $s$ on the left hand side and the result follows.
\end{proof}

Now,
\[
(c^\alpha h)^{p^m} = c^{\alpha(j^{p^m-1}+j^{p^m-2}+\ldots+j+1)}\, h^{p^m} =
\bigleftchoice{c^{\alpha\frac{j^{p^m}-1}{j-1}}}{j\neq 1}{c^{\alpha p^m}}{j=1}.
\]
It follows that if $\alpha\equiv \beta\pmod p$, then
$H_\alpha\in \cH_m$ if and only if $H_\beta\in\cH_m$. Moreover,
two such groups generate a proper subgroup of $P$, so the
corresponding vertices of $\Gamma$ lie in one connected component of the
graph. Thus, each
connected component is represented by some $H_\alpha$ with $0\leq \alpha\leq p-1$.
If $p$ is odd, Lemma \ref{lem:ladicVal} implies that for any
$\alpha\in\{1,\ldots p-1\}$, $H_\alpha$ is a complement
if and only if $m\geq n$. If on the other
hand $p=2$, then $H_1$ is a complement if and only if either 
$j\equiv 3 \pmod 4$ (since the order of $j\in(\bZ/2^n\bZ)^\times$ divides
$2^{m-1}$) or $j\equiv 1 \pmod 4$ and $m\geq n$ (by Lemma \ref{lem:ladicVal}).
In particular, if $p$ is odd and $n>m$ or if 
$p=2$, $j\equiv 1\pmod 4$ and $n>m$, then the graph $\Gamma$ has only one
connected component, so $G$ has no primitive relations.

From now on assume that either $n\le m$ or $p=2$, $j\equiv 3\pmod 4$, and 
the order of $j$ in $(\Z/2^n\Z)^\times$ divides $2^{m-1}$.

If $\alpha \not\equiv \beta\pmod{p}$, then $H_\alpha$ and $H_\beta$
together generate $P$. Since $n>1$, $P/H_\alpha\cap H_\beta$ cannot be
isomorphic to the Heisenberg group of order $p^3$, having exponent strictly
bigger than $p$. Finally, $P/H_\alpha\cap H_\beta$ can only be isomorphic to
a dihedral group if $p=2$ and $j=-1$, in which case any such quotient is
dihedral. This completes the proof of cases (\ref{item:cyc1}) and (\ref
{item:cyc2}) of Theorem \ref{thm:main}.

In all the remaining cases, we have $p=2$.

\section{$K$ is semi-dihedral}

A presentation of $P$ by generators and relations is given by
\begin{eqnarray*}
  \lefteqn{K\rtimes A  = \langle x,c,h|}\\
  & & x^2=c^{2^n}=h^{2^m}=1, xcx=c^{2^{n-1}-1},
  hxh^{-1} = c^kx, hch^{-1} = c^j\rangle,\\
  & & n\geq 3,\;j\in (\Z/2^n\Z)^\times,\;k\in\Z/2^n\Z.
\end{eqnarray*}
Here, $k$ must be even, since if $z$ is the central involution of $K$, then
\[
  h = x(xhx)x =xz^kc^{-k}hx = z^{k}xc^{-k}c^{k}xh = z^{k}h.
\]
The only conjugacy class of non-trivial subgroups of $K$ that do not contain
$\centralCp = \langle z\rangle$ is that of non-central involutions,
represented by $\langle x\rangle$, so the elements of $\cH$ are subgroups of
$P$ that either intersect $K$ trivially or those that intersect it in
$\langle c^rx\rangle$ for some $r\in \Z/2^n\Z$.

Since $k$ is even, $h$ fixes a non-central involution:
$hc^ixh^{-1} = c^{ki+k}x$, and $i$ can be chosen such that $c^{ki+k}=1$. So
without loss of generality, replacing $x$ by $c^ix$ for such an $i$,
assume that $k=0$. In particular, elements of
$\cH_m$ have size $2^{m+1}$ and are isomorphic to $C_2\times C_{2^m}$.

We claim that all elements of $\cH_m$ are contained in
$\langle c^2,h,x\rangle$, so that any two of them generate a proper subgroup
of $P$, which will show that $G$ has no primitive relations when $K$ is
semi-dihedral. Indeed, any element of $\cH_m$ is of the form
$\langle c^rh, c^sx\rangle$, with the two generators commuting, and it
suffices to show that both $r$ and $s$ must be even.
Since $1=(c^sx)^2=z^s$, $s$ must be even. Moreover, the condition that the
two generators commute implies that
\begin{eqnarray*}
\lefteqn{c^rhc^sx(c^rh)^{-1}(c^sx)^{-1} =
z^{-r}c^{2r+(j-1)s}=1}\\
& & \Rightarrow 2r+(j-1)s\equiv 0\pmod 4\\
& & \Rightarrow r\text{ is even},
\end{eqnarray*}
as required.

\section{$K$ is generalised quaternion}

A presentation for $P$ by generators and relations is
\begin{eqnarray*}
  \lefteqn{P = \langle x,c,h\;|\;c^{2^n}=h^{2^m}=1,}\\
  & & x^2=c^{2^{n-1}}, xcx^{-1}=c^{-1},
  hxh^{-1} = c^kx, hch^{-1} = c^j \rangle,\\
  & & n\geq 2,\;j\in (\Z/2^n\Z)^\times,\;k\in\Z/2^n\Z.
\end{eqnarray*}

Since every non-trivial subgroup of $K$ contains $\centralC2$, $\cH$ consists
of subgroups of $P$ that intersect $K$ trivially.
Note, that it follows from the fact that $j$ is odd
that the parity of $k$ is independent of the choice of $x$.
\newline
\textbf{Case 1: $k$ even}
\newline
Then, $h$, $x$, and $c$ are independent in the Frattini quotient of $P$,
which is therefore three-dimensional. Since elements of $\cH$ are cyclic, no
two of them can generate $P$, so the graph $\Gamma$ has only one connected 
component and $G$ has no primitive relations.
\newline
\textbf{Case 2: $k$ odd}
\newline
The Frattini quotient of $P$ is then
two-dimensional and has exactly three lines, generated by the images of $x$,
$h$, and $xh$, respectively. Only two of these
can be images of elements of $\cH_m$, the lines generated by the
images of $h$ and $xh$. The structure of $K$ implies that
two elements of $\cH_m$ never satisfy condition (\ref{edge:quo}) of
Theorem \ref{thm:Knontriv}, so we deduce that there is no edge between the
vertices of $\Gamma$ corresponding to $H,H'\in\cH_m$ if and only if their 
images in the Frattini quotient of $P$ are the lines generated by $h$ and
$xh$, respectively, and that $\Prim(G)\cong C_2$ if and only if there exists
$H\in \cH_m$ with image $\langle xh\rangle$ in the Frattini quotient, and is
trivial otherwise.

An arbitrary element of $\cH_m$ is of the form $\langle hx^\delta c^r\rangle$
for $\delta\in\{0,1\}$ and $0\leq r\leq 2^m-1$, and a group of this
form is in $\cH$ if and only if $hx^\delta c^r$ has order $2^m$
(in general, the order is greater than or equal to $2^m$).
Such a group has image equal to that of $\langle xh\rangle$ in the
Frattini quotient if and only if $\delta=1$. We have
\[
      (hxc^r)^{2^m} = c^{(j^{2^m-1}+j^{2^m-3}+\ldots+j)(i-r+jr)}
      = \bigleftchoice{c^{\text{odd}\cdot 2^{m-1}}}{j^2=1}{c^{\text{odd}\cdot
      (j^{2^m}-1)/(j^2-1)}}{j^2\ne 1}.
\]
By Lemma \ref{lem:ladicVal}, the 2-adic valuation of the exponent is $m-1$ in
both cases, so the right hand side is 1 if and only if $m-1\geq n$. That
condition also insures that $A$ acts non-faithfully on $K$. This
completes the proof of case (\ref{item:quat}) of Theorem \ref{thm:main}.

\section{$K$ is dihedral}

In this case, $P$ is given by generators and relations as
\begin{eqnarray*}
  \lefteqn{P=\langle x,c,h\;|}\\
  & & c^{2^n}=h^{2^m}=x^2=1, xcx=c^{-1},hxh^{-1}=c^kx,hch^{-1}
  =c^j\rangle,\\
  & & n\geq 2,\;j\in (\Z/2^n\Z)^\times,\;k\in\Z/2^n\Z.
\end{eqnarray*}
We have the following identities, which we will use repeatedly:
\begin{eqnarray}
h^i x h^{-i} &=& c^{k(j^{i-1}+j^{i-2}+\ldots+j+1)}\> x =
    \bigleftchoice{c^{k\frac{j^i-1}{j-1}}x}{j\ne 1}{c^{ki}x}{j=1},
    \label{eq:actonx} \\
  h^i c h^{-i} &=& c^{j^i}.\label{eq:actonc}
\end{eqnarray}
Recall that by Hypothesis \ref{hyp:nontrivker}, $h^{2^{m-1}}$ acts trivially on
$K$. Equations (\ref{eq:actonx}) and (\ref{eq:actonc}) imply that this
condition is equivalent to
\[
  2^n|j^{2^{m-1}}-1\text{ i.e. }2^n|(j^2-1)2^{m-2}\quad\text{and}\quad
  \bigleftchoice{2^n|k2^{m-1}}{j=1}{2^n|k(j^{2^{m-1}}-1)/(j-1)}{j\neq 1},
\]
which we assume throughout.
For any $i\in \Z/2^n\Z$, we have
\begin{eqnarray}
  (c^ih)^{2^m} &=& c^{i(j^{2^m-1}+j^{2^m-2}+\ldots+j+1)}\, h^{2^m},
  \label{eq:orderch}\\ 
  (xh)^{2^m} &=& (c^{-k}h^2)^{2^{m-1}} =
  c^{-k(j^{2^m-2}+j^{2^m-4}+\ldots+j^2+1)}\,h^{2^m}.\label{eq:orderxh}
\end{eqnarray}

There are two conjugacy classes of non-trivial subgroups of $K$ that do not
contain $\centralC2$, represented by any $\langle c^\text{even}x\rangle$ and
any $\langle c^\text{odd}x\rangle$, respectively. 
\newline
\textbf{Case 1: $k$ even}\newline
Then, equation (\ref{eq:orderch}) implies that  $c^{-k/2}h$ has order $2^m$
and so without loss of generality, $h$ may be replaced by $c^{-k/2}h$,
which acts trivially on $x$ by conjugation.
Thus, assume without loss of generality that $k=0$. Also, equation (\ref{eq:orderxh})
implies that in this case, $xh$ has order $2^m$, so $h$ can always be replaced
by $xh$, which shows that for any $j$, the choices $j$ and $-j$ yield isomorphic
groups.

The elements of $\cH_m$ are isomorphic to $C_2\times C_{2^m}$. More precisely,
a general element of $\cH_m$ is of the form
$\langle c^\delta x, c^\gamma h\rangle$ with the two generators commuting and
with $c^\gamma h$ having order $2^m$.

We first claim that if $j=\pm1$, i.e. if $P=K\times A$, then $G$ has no
primitive relations. Indeed, suppose without loss of generality that $j=1$.
First note that
\begin{eqnarray*}
\langle c^\delta x, c^\gamma h\rangle\in\cH_m & \Longleftrightarrow &
(c^\delta x)(c^\gamma h) = (c^\gamma h)(c^\delta x)\\
& \Longleftrightarrow & c^{\delta-\gamma}xh = c^{\delta+\gamma}xh\\
& \Longleftrightarrow & \gamma\in\{0,2^{n-1}\}.
\end{eqnarray*}
Let $H_1=\langle c^{\delta_1} x, c^{\gamma_1} h\rangle$,
$H_2=\langle c^{\delta_2} x, c^{\gamma_2} h\rangle\in \cH_m$.
If $\delta_1=\delta_2$ (or indeed if they have the same parity), then $H_1$,
$H_2$ together generate a proper subgroup of $P$ and so lie in one connected
component of $\Gamma$ (see Theorem \ref{thm:Knontriv}, condition (1)).
If $\delta_1\neq\delta_2$ but $\gamma_1 = \gamma_2$, then $H_1\cap H_2$ has
index 2 in each of them and $H_1H_2/H_1\cap H_2$ is dihedral, so again they
lie in the same connected component (Theorem \ref{thm:Knontriv}, condition
(\ref{edge:quo})). Finally, if $\delta_1\neq\delta_2$ and $\gamma_1\neq
\gamma_2$, then
$H_3=\langle c^{\delta_2} x, c^{\gamma_1} h\rangle\in \cH_m$, also, and the
same argument can be applied to the pairs $H_1$, $H_3$ and $H_2$, $H_3$
to show that $H_1$ and $H_2$ lie in the same component of $\Gamma$.

Now, suppose that $j\neq \pm 1$. We will show that then, $\Prim(G)=C_2$.
Replacing $h$ by $xh$ if necessary, we may assume that $j\equiv 3\pmod 4$.
First, we claim that if $\langle c^\delta x, c^\gamma h\rangle\in\cH_m$,
then $\delta\equiv\gamma\pmod{2}$. Indeed, 
\begin{eqnarray*}
  \langle c^\delta x, c^\gamma h\rangle\in\cH_m & \Longleftrightarrow & c^\gamma h c^\delta x h^{-1} c^{-\gamma} = c^{2\gamma+\delta j}x = c^\delta x \\
  & \Longleftrightarrow & 2\gamma + \delta(j-1) \equiv 0 \pmod {2^n},
\end{eqnarray*}
which forces $\gamma$ and $\delta$ to have the same parity, since
$j-1\equiv 2\pmod 4$. Moreover, for any given $\gamma$, this equation has a
solution for $\delta$ and vice versa.

If $H_1=\langle c^{\delta_1} x, c^{\gamma_1} h\rangle$,
$H_2=\langle c^{\delta_2} x, c^{\gamma_2} h\rangle\in \cH_m$ satisfy
$\delta_1\equiv\delta_2\pmod 2$, and therefore also
$\gamma_1\equiv\gamma_2\pmod 2$, then $H_1$ and $H_2$ generate a proper
subgroup of $P$, so there is an edge between the corresponding vertices of
$\Gamma$. It follows that $\Gamma$ has at most
two connected components, represented by $H_{\text{odd}}$ with $\delta_1$ odd
and $H_{\text{even}}$ with $\delta_2$ even. It remains to show that two such
groups exist in $\cH_m$ and that they lie in distinct connected components of
$\Gamma$.
Now, there exists $H_{\text{odd}}\in \cH_m$ if and only if $c^{\text{odd}}h$
has order $2^m$, or equivalently (by (\ref{eq:orderch}))
$2^n|\frac{j^{2^m}-1}{j-1}$, or again equivalently (since $j\equiv 3\pmod{4}$)
$2^{n+1}|j^{2^m}-1$, or equivalently (by Lemma \ref{lem:ladicVal})
$2^{n+1}|(j^2-1)2^{m-1}$. This holds by assumption.

Clearly, $H_{\text{odd}}$ and $H_{\text{even}}$ together generate $P$, so it
is enough to show that $H_{\text{odd}}\cap H_{\text{even}}$ is of index 
greater than 2 in either subgroup. If the intersection was of index 2, then
it would contain an element of order $2^{m-1}$. But $H_{\text{odd}}$ has
only two elements of order $2^{m-1}$, namely
$e_1=(c^{\gamma_1}h)^2=c^{\gamma_1(j+1)}h^2$ and
$e_2=c^{\delta_1}x(c^{\gamma_1}h)^2=
c^{\delta_1-\gamma_1(j+1)}xh^2$, and similarly for $H_{\text{even}}$.
The former is in $H_{\text{even}}$ if and only if
\[
c^{\gamma_1(j+1)}h^2=c^{\gamma_2(j+1)}h^2\Leftrightarrow 2^n|(\gamma_1-\gamma_2)(j+1),
\]
which is impossible since $\gamma_1$, $\gamma_2$ have distinct parities and
$j$ is assumed to be not equal to -1 in $\Z/2^n\Z$.
Similarly, $e_2$ is in $H_{\text{even}}$ if and only if
\[
c^{\delta_1-\gamma_1(j+1)}xh^2=c^{\delta_2-\gamma_2(j+1)}xh^2\Leftrightarrow
2^n|\delta_1-\delta_2 + (\gamma_2-\gamma_1)(j+1),
\]
which is also impossible since $\delta_1-\delta_2$ is odd, while
$(\gamma_2-\gamma_1)(j+1)$ is even. This finishes the proof of case
(\ref{item:diheven}) of Theorem \ref{thm:main}.
\newline
\textbf{Case 2: $k$ is odd}
\newline
In this case, no complement of $C\times K$ in $P$ fixes a non-central
involution of~$K$.
\begin{notation}
Elements of $\cH_m$ are either isomorphic to $C_{2^m}$
and are complements of $C\times K$ in $P$, or isomorphic to
$C_2\times C_{2^{m-1}}$ and generated by the unique index 2 subgroup of a
complement of $C\times K$ in $P$ and by a non-central involution of $K$. We
call the two kinds of elements of $\cH_m$ \emph{full image subgroups} and
\emph{half image subgroups}, respectively, according to their image in
$\Aut C$.
\end{notation}

We begin by counting the number of elements of $\cH_m$ of each of the two
kinds. An arbitrary full image subgroup of $P$ is of the form
$H_g=\langle gh\rangle$ for $g\in K$, and conversely, such a group is in $\cH$
if and only if $gh$ has order $2^m$. By our assumptions (cf. (\ref{eq:orderch})),
this is satisfied for $g=c^i$ for any $i\in \Z/2^n\Z$. Also, by
(\ref{eq:orderxh}), $xh$ has order $2^m$ if and only if
\[
2^n|j^{2^m-2}+j^{2^m-4}+\ldots+j^2+1,
\]
which we assume to hold for $j=1$, while for $j\neq 1$ the condition is
equivalent to $m>n$ by Lemma \ref{lem:ladicVal}.
Since replacing $h$ by $c^ih$ for any $i$ changes neither $j$ nor the parity
of $k$, the same condition holds for $\langle x'h\rangle\in\cH$ for any
non-central involution $x'$ of $K$. It follows that there are $2^n$ full image
subgroups of $P$ if $m\leq n$, and $2^{n+1}$ otherwise.

Next, we determine which of these subgroups are conjugate in $P$.
The orbit of any full image subgroup $H$ under $P$-conjugation has size
$|P/N_PH|=|P/C_PH|$. The last equality follows from the fact that $H$ is a
complement in a semi-direct product. Since $H$ does not commute with any
non-central involution of $K$, we have
\[
C_PH=\{c^ah^b | b\in \Z/2^m\Z, a\in \Z/2^n\Z, a(j-1)=0\in\Z/2^n\Z\}.
\]
The centraliser has size $2^m(j-1,2^n)$, so the orbit has size
$\frac{2^{n+1}}{(j-1,2^n)}$. Note that if $m\leq n$, then our assumptions
imply that $j\equiv 3\pmod 4$. Indeed, otherwise
$v_2((j^{2^{m-1}}-1)/(j-1))=m-1<n$, contradicting the assumption that
$2^n|\frac{j^{2^{m-1}}-1}{j-1}$. Thus, if $m\leq n$, then there is one orbit
of full image subgroups of $P$.

If on the other hand $m>n$, then $H_x$ is also a full image subgroup
with centraliser
\[
C_PH_x = \{c^a(xh)^b | b\in \Z/2^m\Z,a\in \Z/2^n\Z, a(j+1)=0\in\Z/2^n\Z\}.
\]
Replacing $h$ by $xh$ if necessary, we may assume that $j\equiv 3\pmod 4$.
Then the orbit of $H=H_1$ has size $2^n$ and the other orbits have all size
$2^{n+1}/(j+1,2^n)$. We deduce that the total number of orbits is
\[
1+\frac{2^n}{2^{n+1}/(j+1,2^n)} = 1+\frac{(j+1,2^n)}{2}.
\]
From now on, assume without loss of generality that $j\equiv 3\pmod 4$.
To summarise the computations, we have
\begin{proposition}\label{prop:full}
If $m\leq n$, then $H_1$ represents the unique conjugacy class of
full image subgroups in $\cH_m$.
Otherwise, the distinct conjugacy classes of full image elements of
$\cH_m$ are represented by
\[
\{H_g\}, \qquad g \in \{1, x, cx, \ldots c^{(j+1,2^n)-1}x\}.
\]
\end{proposition}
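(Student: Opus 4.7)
The plan is to split into the cases $m \leq n$ and $m > n$ and use orbit-counting based on the centraliser calculations already in hand. In the first case, the standing assumption $2^n \mid (j^{2^{m-1}}-1)/(j-1)$ forces $j \equiv 3 \pmod{4}$ via Lemma \ref{lem:ladicVal}, so $(j-1,2^n) = 2$ and the $P$-orbit of $H_1$ has size $2^{n+1}/(j-1,2^n) = 2^n$, which matches the total number of full image subgroups; hence $H_1$ represents the unique conjugacy class.

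For the case $m > n$, I would split the $2^{n+1}$ full image subgroups into the $2^n$ of the form $H_{c^i}$ and the $2^n$ of the form $H_{c^i x}$. A short check using the formulae $c^a h c^{-a} = c^{a(1-j)} h$ and $x h x = c^{-k} h$, together with $k$ odd, shows that all $H_{c^i}$ are $P$-conjugate to $H_1$, exhausting the first family (consistent with the orbit of $H_1$ having size $2^n$). The main computation is then to understand the $P$-conjugation action on the $H_{c^i x}$; a direct calculation using the defining relations gives
\[
c^a (c^i x h) c^{-a} = c^{i + a(j+1)} x h, \qquad h (c^i x h) h^{-1} = c^{ij+k} x h,
\]
so that $\langle c \rangle$-conjugation already produces orbits indexed by residues modulo $(j+1,2^n)$, with representatives $c^i x$ for $0 \leq i < (j+1,2^n)$. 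Combined with $H_1$, these cover every $P$-conjugacy class, which gives the list asserted in the proposition.

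The main technical point is carrying out the orbit analysis modulo $(j+1,2^n)$ and, in the argument for the $H_{c^i}$, verifying that $c$- and $x$-conjugation together span both parities of the index; once the two conjugation formulae above are in hand, the remainder is a short bookkeeping exercise with the centraliser sizes already computed.
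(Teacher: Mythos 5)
Your proposal is correct. The $m\le n$ case is identical to the paper's argument: the standing divisibility hypothesis forces $j\equiv 3\pmod 4$, hence $(j-1,2^n)=2$, and the orbit of $H_1$ has size $2^{n+1}/(j-1,2^n)=2^n$, which exhausts the $2^n$ full image subgroups. For $m>n$ you take a genuinely different route. The paper stays with orbit--stabiliser counting: it computes the centraliser $C_PH_x=\{c^a(xh)^b \,:\, a(j+1)=0\}$, deduces that every orbit of a subgroup $H_{c^ix}$ has size $2^{n+1}/(j+1,2^n)$, and obtains the number of classes as a quotient of cardinalities, namely $1+(j+1,2^n)/2$ in total. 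You instead compute the conjugation action directly on generators via $c^a(c^ixh)c^{-a}=c^{i+a(j+1)}xh$ and $h(c^ixh)h^{-1}=c^{ij+k}xh$, and read off that the residues of $i$ modulo $(j+1,2^n)$ already index the $\langle c\rangle$-orbits, so the listed $H_{c^ix}$ together with $H_1$ cover every class. The paper's route buys the exact number of conjugacy classes; yours buys explicit representatives with less bookkeeping. One remark: your own formula for $h$-conjugation shows that modulo $(j+1,2^n)$ the index $i$ is sent to $k-i$ (since $j\equiv-1$ there), so $H_{c^ix}$ and $H_{c^{k-i}x}$ are conjugate and the proposition's list of $(j+1,2^n)$ elements of the form $c^ix$ in fact names each class twice --- consistent with the paper's count of $(j+1,2^n)/2$ such classes. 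So the listed representatives are not pairwise non-conjugate; your covering argument proves exactly what is true (and all that is used later in the paper), but neither your proof nor the paper's establishes the stronger ``distinct representatives'' reading, which is in fact false.
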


We now turn to the computation of half image elements of $\cH_m$. These are
of the form
\[
  B = \langle c^b x^\delta h^2, c^a x \rangle.
\]

Multiplying $c^b x^\delta h^2$ by $c^a x$ if necessary
we may assume $\delta=0$, and denote the resulting group by $B_{a,b}$.
We have
\[
  c^b h^2 \cdot c^a x \cdot (c^b h^2)^{-1} \cdot (c^a x)^{-1}
   = c^{2b+k(j+1)+a(j^2-1)}.
\]
The two generators commuting is equivalent to
\begin{eqnarray}\label{eq:B}
  2b+k(j+1)+a(j^2-1) \equiv 0 \pmod{2^n},
\end{eqnarray}
which, given any $a\in\bZ/2^n\bZ$, has two solutions for $b$. This yields at
most $2^{n+1}$ half image subgroups. Moreover, since any solution of 
(\ref{eq:B}) automatically satisfies
\[
  v_2(2b) \ge v_2(j+1),
\]
we claim that $c^b h^2$ has order $2^{m-1}$ (the minimal possible), so that
any such $B_{a,b}$ really does represent an element of $\cH_m$. Indeed,
\[
  (c^b h^2)^{2^{m-1}} = c^{b(j^{2^m-2}+j^{2^m-4}+\ldots+j^2+1)}
     = \bigleftchoice{c^{b\frac{j^{2^m}-1}{j^2-1}}}{j^2\ne 1}
                  {c^{b2^{m-1}}}{j^2=1}.
\]
When $j^2\not\equiv 1\pmod {2^n}$,
\[
  b\, \frac{j^{2^m}-1}{j^2-1} = \frac{j^{2^{m-1}}-1}{j-1} \cdot \frac{b}{j+1} \cdot
      (j^{2^{m-1}}+1).
\]
The three terms on the right have 2-adic valuations $\ge v_2(2^n)$, $\ge -1$ and
$\ge 1$ respectively, so the product is a multiple of $2^n$.
When $j^2\equiv 1\pmod {2^n}$,
the same computation together with Lemma \ref{lem:ladicVal}
proves that $b2^{m-1}$ is a multiple of $2^n$, as required. Thus, the half
image subgroups of $P$ are precisely $B_{a,b}$ for any solution $a,b$ of
(\ref{eq:B}).

\begin{lem}
There are two conjugacy classes of half image maximal groups in $\cH$:
for any $a$ the representatives are $B_{a,b}, B_{a,b'}$, where $b,b'$
are the two different solutions of (\ref{eq:B}).
\end{lem}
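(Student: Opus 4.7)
The plan is to let $P$ act by conjugation on the set of half image subgroups, identified with pairs $(a,b)\in(\Z/2^n\Z)^2$ satisfying (\ref{eq:B}), and to decompose this action into orbits.

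First I will compute the induced action of the three generators. Using $xcx=c^{-1}$ together with the identity $xh^2x=c^{-k(j+1)}h^2$ (which one obtains by iterating $hxh^{-1}=c^kx$), the actions come out as
\[
c:(a,b)\mapsto(a+2,\,b+1-j^2),\quad h:(a,b)\mapsto(aj+k,\,bj),\quad x:(a,b)\mapsto(-a,\,-b-k(j+1)),
\]
and a routine check shows each preserves (\ref{eq:B}).

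Next I build up the orbits. Since $j$ is odd, $v_2(1-j^2)\ge 3$, so the $\langle c\rangle$-orbit of $(a,b)$ is $\{(a+2s,\,b+s(1-j^2)):s\in\Z/2^{n-1}\Z\}$, a set of exactly $2^{n-1}$ pairs, hitting every first coordinate of the same parity as $a$ exactly once. Because both $j$ and $k$ are odd, the $h$-action flips the parity of $a$, so the $\langle c,h\rangle$-orbit has exactly $2^n$ elements and contains, for each $a'\in\Z/2^n\Z$, exactly one of the two solutions of (\ref{eq:B}).

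The crucial step is to verify that $x$-conjugation does not send $B_{a,b}$ into a second orbit. For this I will observe that the pair in the $\langle c\rangle$-orbit of $(a,b)$ with first coordinate $-a$ has second coordinate $b+a(j^2-1)$, and that the required congruence
\[
-b-k(j+1)\equiv b+a(j^2-1)\pmod{2^n}
\]
is nothing but a rewriting of (\ref{eq:B}), hence automatic. It follows that the full $P$-orbit of $B_{a,b}$ coincides with the $\langle c,h\rangle$-orbit and has size $2^n$. Since there are $2^{n+1}$ half image subgroups in total, there must be exactly two conjugacy classes; and for any fixed $a$ the two solutions $b,b'$ of (\ref{eq:B}) lie in distinct orbits, so $B_{a,b}$ and $B_{a,b'}$ serve as representatives of the two classes.

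The main obstacle is simply the careful bookkeeping with the three generator actions and the $2$-adic valuation estimates they rely on; the clean point to spot is that the $x$-action is absorbed by $\langle c,h\rangle$ precisely because of the defining constraint on half image subgroups, which is what makes the lemma come out with exactly two classes.
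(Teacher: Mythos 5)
Your translation of conjugation into the three maps on pairs $(a,b)$ is correct (I checked $xh^2x=c^{-k(j+1)}h^2$ and the resulting formulas), and the observation that the $x$-action is absorbed into the $\langle c\rangle$-orbit precisely because of (\ref{eq:B}) is a genuine, and different, route: the paper never coordinatises the action, but instead notes that all $2^n$ non-central involutions of $K$ are $P$-conjugate, deduces $|C_P(c^ax)|=2^{m+1}$, identifies this centraliser with $\langle z\rangle\times B_{a,b}$, and concludes that $N_P(B_{a,b})$ has the same order, so that every orbit has size $2^n$ and the count $2^{n+1}$ forces two classes. However, your argument has a gap at the step ``the $\langle c,h\rangle$-orbit has exactly $2^n$ elements.'' The parity observation ($j$ and $k$ odd, so $h$ flips the parity of $a$) only shows the orbit has \emph{at least} $2^n$ elements. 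The solutions of (\ref{eq:B}) with first coordinate of fixed parity form two $\langle c\rangle$-orbits, so there are four $\langle c\rangle$-orbits in all, permuted by $h$ (which normalises $\langle c\rangle$) with alternating parity; a priori $h$ could act on them as a $4$-cycle, in which case the $\langle c,h\rangle$-orbit would be all $2^{n+1}$ pairs and there would be only \emph{one} conjugacy class. Ruling this out is exactly the content of the lemma, so it cannot be passed over.

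The missing step is the analogue for $h^2$ of the check you carry out for $x$, and it is true for the same reason: $h^2$ sends $(a,b)$ to $(aj^2+k(j+1),\,bj^2)$; the element of the $\langle c\rangle$-orbit of $(a,b)$ with first coordinate $aj^2+k(j+1)$ is obtained with $s=\tfrac12\bigl(a(j^2-1)+k(j+1)\bigr)$ and has second coordinate $b+s(1-j^2)$; and the required congruence $bj^2\equiv b+s(1-j^2)\pmod{2^n}$ rearranges to
\[
(j^2-1)\cdot\tfrac12\bigl(2b+a(j^2-1)+k(j+1)\bigr)\equiv 0\pmod{2^n},
\]
which holds because the bracket is divisible by $2^n$ by (\ref{eq:B}) and $v_2(j^2-1)\ge 3$. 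With this inserted (and the routine remark that the $\langle c,h\rangle$-orbit is automatically $x$-stable since $\langle c,h\rangle$ has index $2$ in $P$), your orbit decomposition is complete and the conclusion follows as you state it.
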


\begin{proof}
Because conjugation by $h$ acts on the non-central involutions of $K$ as
$x\mapsto c^i x$, $i$ odd, and conjugation by $c$ maps $x\mapsto c^2x$,
all non-central involutions of $K$ are $P$-conjugate. Because the orbit
of $c^ax$ has length $2^n$, its centraliser has size $2^{m+1}$, so it must be
$\centralC2\times B_{a,b}$. Therefore the normaliser of $B_{a,b}$
is of the same size (a normalising element must fix
$B_{a,b}\cap K=\{1,c^ax\}$), so its orbit has size $2^n$ as well.
\end{proof}

Finally, we determine which vertices of $\Gamma$ corresponding to the elements
of $\cH_m$ are connected by an edge, and hence the structure of $\Prim(G)$.

\begin{lem}\label{lem:halffull}
If $j=-1$, then the unique full image subgroup of $P$ that intersects
a half image subgroup in an index 2 subgroup is $H$.
If $j\neq -1$, then there are no such full image subgroups.
\end{lem}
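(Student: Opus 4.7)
The strategy is to exploit the fact that each full image subgroup $H_g=\langle gh\rangle$ is cyclic of order $2^m$ and therefore has a unique subgroup of index $2$, namely $\langle(gh)^2\rangle$. Consequently $H_g\cap B_{a,b}$ has index $2$ in $H_g$ if and only if $(gh)^2\in B_{a,b}$, and since both subgroups already have order $2^m$, this simultaneously forces index $2$ in $B_{a,b}$. Now $(gh)^2$ lies in $K\cdot h^2$, and the elements of $B_{a,b}$ whose image in $P/K\iso A$ equals $h^2$ are exactly $c^bh^2$ and $c^{a-b}xh^2$. The two explicit formulas computed below show that $(gh)^2$ never has an $x$-component, so the containment always reduces to the single equation $(gh)^2=c^bh^2$.

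The next step is to split according to $g=c^i$ or $g=c^ix$. For $g=c^i$ a direct expansion gives $(c^ih)^2=c^{i(j+1)}h^2$, so the containment reads $b\equiv i(j+1)\pmod{2^n}$. Substituting into (\ref{eq:B}) yields
\[
(j+1)\bigl(2i+k+a(j-1)\bigr)\equiv 0\pmod{2^n}.
\]
The inner bracket is odd, since $k$ is odd while $2i$ and $a(j-1)$ are both even (the latter because $j$ is odd). Hence the condition collapses to $2^n\mid j+1$, i.e.\ $j=-1$ in $\bZ/2^n\bZ$.

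For $g=c^ix$ I would first record the auxiliary identity $xhx=c^{-k}h$, which follows at once from $hxh^{-1}=c^kx$ by multiplying by $x$ on the right. This gives $(c^ixh)^2=c^{i(1-j)-k}h^2$, and the analogous substitution into (\ref{eq:B}) yields
\[
(j-1)\bigl(-2i+k+a(j+1)\bigr)\equiv 0\pmod{2^n}.
\]
The bracket is again odd, so the condition reduces to $2^n\mid j-1$, i.e.\ $j=1$ in $\bZ/2^n\bZ$; this contradicts the standing assumption $j\equiv 3\pmod 4$ (with $n\geq 2$), so no $H_{c^ix}$ ever contributes.

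The two cases together already settle the statement when $j\neq -1$. When $j=-1$ only the $H_{c^i}$ qualify, and it remains to see that these form a single $P$-conjugacy class, that of $H=H_1$. Concretely, $cH_1c^{-1}=H_{c^{1-j}}=H_{c^2}$, so iterating shows that the $\langle c\rangle$-orbit of $H_1$ exhausts the $H_{c^i}$ with $i$ even; and $xH_1x^{-1}=\langle xhx\rangle=H_{c^{-k}}$ with $k$ odd supplies an odd-index representative, whose $\langle c\rangle$-orbit then sweeps through the remaining $H_{c^i}$ with $i$ odd. The one delicate point is ruling out the $c^{a-b}xh^2$ alternative when placing $(gh)^2$ in $B_{a,b}$; this is settled by inspection of the two formulas, and the rest is book-keeping of $2$-adic parities.
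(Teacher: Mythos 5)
Your proof is correct and follows essentially the same route as the paper's: square the generator of a full image subgroup, observe the square has no $x$-component so membership in $B_{a,b}$ forces its $c$-exponent to equal $b$, substitute into (\ref{eq:B}), and use that $k$ is odd and $j\equiv 3\pmod 4$ to reduce to $2^n\mid j+1$ (resp.\ $2^n\mid j-1$, which is impossible). You are somewhat more explicit than the paper — handling all $H_{c^i}$ uniformly and re-verifying their conjugacy directly instead of invoking Proposition \ref{prop:full}, and explicitly ruling out the $c^{a-b}xh^2$ alternative — but the underlying argument is the same.
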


\begin{proof}
We always have the full image group $H=\langle h\rangle$. The unique
index 2 subgroup of $H$ is a subgroup of a half image group if and
only if $b=0$ is a solution of (\ref{eq:B}) (for some $a$),
which it is if and only if $j\equiv-1\pmod {2^n}$.

Other full image groups (in the case $m>n$) are generated by $c^dxh$
for some $d$.

The square of this element is $c^{d-k-dj}h^2$. The exponent $b$ is odd,
so it cannot solve (\ref{eq:B}), since $j+1\equiv 0\pmod 4$.
\end{proof}

\begin{proposition}
The group $\prim(G)$ is isomorphic to
\begin{itemize}
\item $\{1\}$ if $m\le n$ and $j=-1$;
\item $C_2$ if $m> n$ or $j\neq-1$ but not
both;
\item $C_2\times C_2$ if $m>n$ and
$j\neq-1$.
\end{itemize}
\end{proposition}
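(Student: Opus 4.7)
The plan is to apply Theorem~\ref{thm:Knontriv} and identify $\prim(G)\cong C_2^{d-1}$, where $d$ is the number of connected components of $\Gamma$. The vertices are already enumerated: by Proposition~\ref{prop:full} there is $1$ conjugacy class of full image subgroups when $m\le n$ and $1+(j+1,2^n)/2$ classes when $m>n$, while the half image subgroups $B_{a,b}$ form two conjugacy classes. By Remark~\ref{rmrk:conjugateGroups}, conjugate subgroups lie in a common component, so it suffices to analyse edges among class representatives.

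The crux is the Frattini quotient. Since $k$ is odd, $[h,x]=c^k$ puts $c\in\Phi(P)$, so $P/\Phi(P)\cong C_2\times C_2$ is generated by the images $\bar{h}$ and $\bar{x}$. The vertices of $\Gamma$ then sort themselves by the line of $P/\Phi(P)$ they project to: $\langle\bar{h}\rangle$ for $\langle c^ih\rangle$, $\langle\overline{xh}\rangle$ for $\langle c^ixh\rangle$ (non-empty only when $m>n$), and $\langle\bar{x}\rangle$ for every half image subgroup (since $h^2\in\Phi(P)$). Two vertices on the same line generate a proper subgroup of $P$ and are therefore joined by condition~(1) of Theorem~\ref{thm:Knontriv}; two vertices on different lines generate all of $P$, so any cross-line edge must come from condition~(\ref{edge:quo}). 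This already cuts $d$ down to at most three.

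The remaining task is to decide which lines fuse. Lemma~\ref{lem:halffull} settles the $\langle\bar{h}\rangle$-versus-$\langle\bar{x}\rangle$ comparison: an edge exists precisely when $j=-1$, the order-$4$ quotient being the Klein four group, which counts as dihedral just as in Case~1 of this section. For the remaining cross-line comparisons I would square generators and compare $c$-exponent parities: $(c^{i'}xh)^2=c^{i'(1-j)-k}h^2$ has odd $c$-exponent (since $k$ and $j$ are both odd), whereas $(c^ih)^2=c^{i(j+1)}h^2$ and the cyclic part $\langle c^bh^2\rangle$ of any $B_{a,b}$ both have even $c$-exponent (the latter from equation~(\ref{eq:B}) together with $j\equiv 3\pmod 4$, giving $v_2(2b)\ge v_2(j+1)\ge 2$). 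A short parity argument then shows that two cyclic subgroups $\langle c^\alpha h^2\rangle$ and $\langle c^\beta h^2\rangle$ of order $2^{m-1}$ with $\alpha\not\equiv\beta\pmod 2$ cannot coincide, ruling out every edge involving the line $\langle\overline{xh}\rangle$; a final check disposes of the only remaining possibility, that the non-cyclic index-$2$ subgroup of some $B_{a,b}$ coincides with the cyclic index-$2$ subgroup of $\langle c^{i'}xh\rangle$.

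With the edges in hand, counting components is mechanical: for $m\le n$ only the lines $\langle\bar{h}\rangle$ and $\langle\bar{x}\rangle$ appear, giving $d=1$ if $j=-1$ and $d=2$ otherwise; for $m>n$ all three lines appear, the line $\langle\overline{xh}\rangle$ is isolated, and the other two fuse iff $j=-1$, giving $d=2$ or $3$. Converting via $\prim(G)\cong C_2^{d-1}$ yields the three stated cases. The main obstacle will be the parity and $2$-adic bookkeeping needed for the cross-line exclusions; this rests on Lemma~\ref{lem:ladicVal} and the normalisation $j\equiv 3\pmod 4$.
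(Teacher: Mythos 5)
Your overall route is the paper's own: all half image groups lie in one component because any two of them generate a proper subgroup; a full image and a half image group always generate $P$, so any edge between them must come from condition (\ref{edge:quo}) and is governed by Lemma \ref{lem:halffull}; the classes $H_{c^ix}$ pairwise generate proper subgroups and, when $m>n$, form a component that no condition-(\ref{edge:quo}) edge can reach. Organising this by the three lines of $P/\Phi(P)\cong C_2\times C_2$ (legitimate, since $[h,x]=c^k$ with $k$ odd puts $c$ in $\Phi(P)$) is a clean repackaging, and your parity comparison of $(c^{i'}xh)^2=c^{\mathrm{odd}}h^2$ against $(c^ih)^2=c^{\mathrm{even}}h^2$ and $c^{b}h^2$ with $b$ even is exactly the device used in the proof of Lemma \ref{lem:halffull} and in the last sentence of the paper's proof. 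The component counts and the translation via $\Prim(G)\cong C_2^{d-1}$ agree with the paper.

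One step is wrong as written, and it is the one place where an edge must be shown to \emph{exist}. In condition (\ref{edge:quo}) of Theorem \ref{thm:Knontriv}, $HH'$ is the subgroup generated by $H$ and $H'$ (it could not otherwise ever be the Heisenberg group of order $p^3$), not the product set. Since, as you yourself observe, $H=\langle h\rangle$ and a half image group $B_{a,0}$ together generate all of $P$, the quotient to be tested when $j=-1$ is $P/(H\cap B_{a,0})=P/\langle h^2\rangle$, of order $2^{n+2}$ --- not an ``order-$4$ Klein four group''. You therefore still owe the verification that this quotient is dihedral: in $P/\langle h^2\rangle$ the element $\overline{xh}$ has order $2^{n+1}$ because $(xh)^2=c^{-k}h^2$ with $k$ odd, the cyclic subgroup it generates has index $2$ and contains $\bar c$, and $\bar x$ lies outside it and inverts it, so the quotient is indeed dihedral. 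This check cannot be skipped: if the quotient were not dihedral there would be no edge, and the answer in the case $m\le n$, $j=-1$ would become $C_2$ rather than $\{1\}$. (A smaller imprecision: the ``non-cyclic index-$2$ subgroup of $B_{a,b}$'' trivially cannot equal a cyclic group; the case you actually need to exclude is the second cyclic index-$2$ subgroup $\langle c^{a+b}xh^2\rangle$, which is ruled out because the index-$2$ subgroup of $\langle c^{i'}xh\rangle$ contains no elements involving $x$.)
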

\begin{proof}
First, note that two half image groups always generate together at most
the preimage of the unique index 2 subgroup of $\Aut C$, so they always lie
in the same connected component of the graph $\Gamma$.
A full image group and a half image group together
always generate all of $P$ (the group they generate contains a non-central
involution of $K$ and a complement to $K$ in $P$, hence also $c$, since $k$
is odd). If $j=-1$, then $H$ intersects a half image subgroup in an index
2 subgroup and the quotient of $P$ by that intersection
is dihedral, so in this case, $H$ lies in the same connected component of
$\Gamma$ as the half image groups. 
Proposition \ref{prop:full} and Lemma \ref{lem:halffull} now imply that if
$m\le n$, then $\Prim(G)$ is trivial if $j=-1$ and has order 2 otherwise.
Suppose that $m>n$. Two full image groups of the form $H_{c^ix}$ generate
a proper subgroup of $P$, since the subgroup they generate contains no
non-central involutions of $K$. On the other hand, $H$ together with any
non-conjugate full image subgroup generates $P$ by the same argument as above,
and the intersection of two such groups has index greater than 2, using the
fact that $k$ is odd. The claim of the proposition follows.
\end{proof}
This finishes the proof of case (\ref{item:dihodd}) of Theorem \ref{thm:main}.

\begin{ackn}
The first author was
partially supported by the EPSRC and is partially supported by
a Research Fellowship from the Royal Commission for the Exhibition
of 1851, and the second author is supported by a Royal Society
University Research Fellowship. Parts of this research were done at 
St Johns College, Robinson College and DPMMS in Cambridge, 
CRM in Barcelona, and Postech University in Pohang. 
We would like to thank these institutions
for their hospitality and financial support.
\end{ackn}


\end{document}